   \def\MR#1{}
\long\def\@savemarbox#1#2{\global\setbox#1\vtop{\hsize\marginparwidth
  \@parboxrestore\tiny\raggedright #2}}
\renewcommand*{\backref}[1]{}
\renewcommand*{\backrefalt}[4]{
  \ifcase #1
  [No citations.]
  \or [#2]
  \else [#2]
  \fi }
\numberwithin{equation}{section}
\theoremstyle{plain}
\newtheorem{theorem}[equation]{Theorem}
\newtheorem{corollary}[equation]{Corollary}
\newtheorem{lemma}[equation]{Lemma}
\newtheorem{conjecture}[equation]{Conjecture}
\newtheorem{proposition}[equation]{Proposition}
\newtheorem*{T1}{Theorem~\ref{Thm:SignatureCorrection}}
\newtheorem*{T2}{Theorem~\ref{Thm:SignatureSlopeTwisting}}
\newtheorem*{namedtheorem}{\theoremname}
\newcommand{\theoremname}{testing}
\theoremstyle{definition}
\newtheorem{definition}[equation]{Definition}
\newcommand{\HH}{{\mathbb{H}}}
\newcommand{\RR}{{\mathbb{R}}}
\newcommand{\ZZ}{{\mathbb{Z}}}
\newcommand{\CC}{{\mathbb{C}}}
\newcommand{\QQ}{{\mathbb{Q}}}
\renewcommand{\d}{\partial}
\newcommand{\lk}{\text{lk}}
\newcommand{\sh}{\widehat{\sigma}}
\renewcommand{\Re}{\operatorname{Re}}
\renewcommand{\Im}{\text{Im}}
\newcommand{\vol}{\operatorname{vol}}
\newcommand{\slope}{\operatorname{slope}}
\newcommand{\inj}{\operatorname{inj}}
\newcommand{\cT}{\mathcal{T}}
\newcommand{\Z}{\mathbb{Z}}
\newcommand{\eps}{\varepsilon}
\newcommand{\tw}{\mathrm{tw}}
\newcommand{\cl}{\mathrm{cl}}
\newcommand{\im}{\text{Im}}
\newcommand{\Area}{\operatorname{Area}}
\begin{document}

\title[The signature and cusp geometry of hyperbolic knots]{The signature and cusp geometry \\ of hyperbolic knots}

\author{Alex Davies}
\address{DeepMind, London, UK}
\email{adavies@google.com}

\author{Andr\'{a}s Juh\'{a}sz}
\address{Mathematical Institute, University of Oxford, Andrew Wiles Building,
	Radcliffe Observatory Quarter, Woodstock Road, Oxford, OX2 6GG, UK}
\email{juhasza@maths.ox.ac.uk}

\author{Marc Lackenby}
\address{Mathematical Institute, University of Oxford, Andrew Wiles Building,
	Radcliffe Observatory Quarter, Woodstock Road, Oxford, OX2 6GG, UK}
\email{lackenby@maths.ox.ac.uk}

\author{Nenad Tomasev}
\address{DeepMind, London, UK}
\email{nenadt@deepmind.com}

\begin{abstract}
	 We introduce a new real-valued invariant called the natural slope of a hyperbolic knot in the 3-sphere, which is defined in terms of its cusp geometry. We show that twice the knot signature and the natural slope differ by at most a  constant times the hyperbolic volume divided by the cube of the injectivity radius. This inequality was discovered using machine learning to detect relationships between various knot invariants. It has applications to Dehn surgery and to 4-ball genus. We also show a refined version of the inequality where the upper bound is a linear function of the volume, and the slope is corrected by terms corresponding to short geodesics that link the knot an odd number of times.
\end{abstract}

\maketitle

\section{Introduction}

In low-dimensional topology, there are two very different types of invariant: those derived from hyperbolic structures on 3-manifolds, and those invariants with connections to 4-dimensional manifolds. Of the latter type, one of the most fundamental invariants is the signature of a knot. Our main goal in this paper is to establish a new and unexpected connection between these two fields. We will show that the cusp geometry of a hyperbolic knot in the 3-sphere encodes information about the signature of the knot.

One of the most important geometric features of a hyperbolic knot $K$ is its maximal cusp. The boundary of this cusp is a Euclidean torus that forms the boundary of a regular neighbourhood of $K$. This torus is isometric to $\mathbb{C}/ \Lambda$ for a lattice $\Lambda$ in $\mathbb{C}$. The meridian and longitude of the knot give generators $\mu$ and $\lambda$ for $\Lambda$. The parallelogram in $\mathbb{C}$ spanned by $0$, $\mu$, $\lambda$, and $\mu+ \lambda$ forms a fundamental domain for the action of $\Lambda$ on $\mathbb{C}$. We introduce a new geometric quantity called the \emph{natural slope} that measures how far this parallelogram is from being right-angled. It can be defined by the following formula:
\[
\slope(K) = \Re(\lambda/\mu).
\] 
Alternatively, natural slope can be defined as follows. Pick a geodesic on the torus $\mathbb{C}/\Lambda$ that represents a meridian. Choose any point on such a geodesic and send off a geodesic orthogonally from this point. It runs along the knot and eventually it comes back to the initial meridian; see Figure~\ref{Fig:NaturalSlope}. In doing so, it has gone along a longitude minus some number $s$ of meridians. This number $s$ is not necessarily an integer because the geodesic may return to a different point along the meridian from where it started. This real number $s$ is the natural slope of $K$. 

We remark that quantities with a resemblance to the natural slope have been defined by other authors \cite{BenardFlorensRodau, DegtyarevFlorensLecuona}. However, these other quantities do not seem to be directly related to natural slope, and none of these previous articles seems to provide a connection between hyperbolic geometry and signature.

\begin{figure}[t]
\centering
\includegraphics[width=0.4\textwidth]{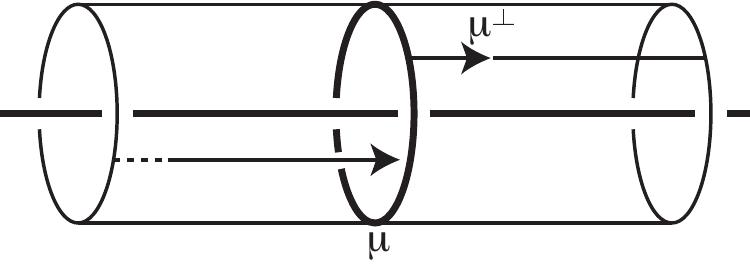}
\caption{A geodesic running in the direction $\mu^\perp$ that is perpendicular to the meridian $\mu$. By the time it returns to the meridian, it has travelled one longitude minus some multiple $s$ of the meridian. This real number $s$ is the natural slope of $K$.} \label{Fig:NaturalSlope}
\end{figure}

\begin{figure}
	\includegraphics[scale = 0.45]{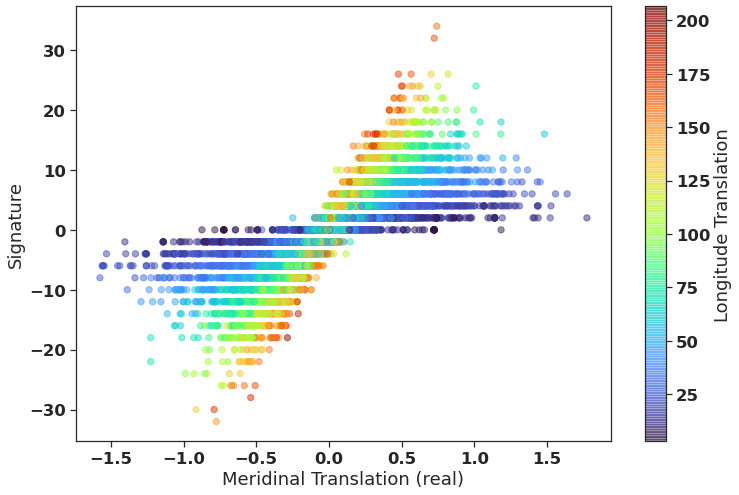}
	\caption{A plot of signature versus the real part of the meridional translation, $\Re(\mu)$, coloured by longitudinal translation, for a dataset of knots randomly generated by SnapPy.}
	\label{fig:butterfly}
\end{figure}

Experimentally, starting from the plot in Figure~\ref{fig:butterfly}, we have observed that the natural slope of $K$ is very highly correlated with $2 \sigma(K)$, where $\sigma(K)$ is the signature. See Figure~\ref{fig:signature-slope} for plots of signature versus slope for knots up to 16 crossings in the Regina census~\cite{regina} and for random knots generated by SnapPy~\cite{SnapPy} having 10 to 80 crossings in their SnapPy-simplified forms. Our goal in this paper is to prove that such a surprising connection holds and to explore its consequences. Our first main result, which we prove in Section~\ref{sec:signature}, establishes that $\slope(K)$ is approximately equal to $2 \sigma(K)$, but with an additive error that can be bounded by geometric quantities.

\begin{theorem}
	\label{thm:main}
	There exists a constant $c_1$ such that, for any hyperbolic knot $K$,
	\[
	|2\sigma(K) - \slope(K)| \leq c_1 \vol(K) \inj(K)^{-3}.
	\]
\end{theorem}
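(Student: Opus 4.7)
The plan is to realize both $2\sigma(K)$ and $\slope(K)$ as contributions to the $\rho$-invariant of the knot exterior $X = S^{3}\setminus K$, and to bound their difference via the thick-thin decomposition of hyperbolic geometry.

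I would first identify $\sigma(K)$ with a $\rho$-invariant. Let $\alpha\colon\pi_{1}(X)\to U(1)$ be the character sending the meridian $\mu$ to $-1$. The Atiyah--Patodi--Singer index theorem, applied to a $4$-manifold with boundary obtained from $X$ by capping along an integer framing, identifies the Levine--Tristram signature $\sigma_{-1}(K)=\sigma(K)$ with the $\rho$-invariant $\rho_{\alpha}(X)$. The theorem then becomes the estimate $|2\rho_{\alpha}(X)-\slope(K)|\le c_{1}\vol(K)\inj(K)^{-3}$, reducing a $4$-dimensional topological invariant to a spectral quantity on the $3$-dimensional hyperbolic manifold $X$.

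Next I would compute $\eta_{\alpha}(X)$ in terms of the hyperbolic structure. Using the thick-thin decomposition, $\eta_{\alpha}(X)$ splits as a sum of a cusp contribution, a thick-part contribution, and contributions from Margulis tubes around short geodesics. The cusp contribution depends only on the flat torus $\mathbb{C}/\Lambda$ and the holonomy of $\alpha$ there, namely $\mu\mapsto -1,\,\lambda\mapsto 1$; a direct Poisson-summation calculation, analogous to the APS computation on $T^{2}$, should identify this cusp contribution with precisely $\Re(\lambda/\mu)=\slope(K)$, up to the universal factor of two. This explains the observed coincidence $2\sigma(K)\approx\slope(K)$ to leading order.

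The error terms come from the bulk of $X$. On the thick part, the spectral gap of the twisted Laplacian is bounded below by a constant times $\inj(K)^{2}$, and the volume is at most $\vol(K)$, so standard heat-kernel bounds give a contribution of order at most a constant times $\vol(K)\inj(K)^{-3}$. Each Margulis tube around a geodesic of length $\ell$ contributes a term of size at most a constant times $\ell^{-3}$, and the number of such tubes is controlled by $\vol(K)$, yielding an error of the same order. The main obstacle is the careful $\eta$-invariant calculus on a cusped hyperbolic manifold: one must handle the continuous spectrum at the cusp correctly, and extract $\slope(K)$ exactly as the leading term rather than merely up to an $O(1)$ constant. The structure of the error---short-geodesic contributions weighted by the value of $\alpha$ on the geodesic, which depends on its linking parity with $K$---matches the refined version of the theorem mentioned in the abstract, lending further credence to this approach.
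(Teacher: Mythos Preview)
Your proposal is a genuinely different route from the paper's, and an interesting one, but as written it has real gaps.

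The paper's argument is elementary and direct. It constructs a triangulation $\mathcal{T}$ of the knot exterior with at most $c\,\vol(K)\inj(K)^{-3}$ tetrahedra, via a Voronoi-type construction (placing an $\eps$-net with $\eps$ proportional to $\inj(K)$). Inside this triangulation it finds a normal unoriented spanning surface $F$ with boundary slope $\lambda - n\mu$, where $n$ is the even integer closest to $\slope(K)$, and with $|\chi(F)|$ bounded by the number of tetrahedra. Then the Gordon--Litherland formula $\sigma(K)=\sigma(G_F)+e(F)/2$ finishes the job: $e(F)=n$ is within $1$ of $\slope(K)$, and $|\sigma(G_F)|\le b_1(F)$ is controlled by $|\chi(F)|$. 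No spectral theory enters at all; the $\inj(K)^{-3}$ arises simply from packing balls of radius $\sim\inj(K)$ into a manifold of volume $\vol(K)$.

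In your approach, the central difficulty is the claimed additive decomposition of $\eta_\alpha(X)$ along the thick--thin decomposition. The $\eta$-invariant is a global spectral quantity; it does not localise in this way without gluing formulas (Bunke, Kirk--Lesch, et al.) that introduce additional correction terms depending on boundary data and Lagrangians, and those corrections are not obviously controlled by $\vol(K)\inj(K)^{-3}$. Your heat-kernel bound on the thick part---``spectral gap $\gtrsim \inj(K)^2$ hence contribution $\lesssim \vol(K)\inj(K)^{-3}$''---is not a standard estimate for $\eta$: a spectral gap bounds the large-time heat trace, but $\eta$ involves the full spectrum including small eigenvalues, and there is no general inequality of the form $|\eta|\le C\cdot\vol/\mathrm{gap}^{3/2}$. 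Likewise, the assertion that the cusp contribution equals exactly $\Re(\lambda/\mu)$ via Poisson summation is plausible in spirit but is precisely the nontrivial computation; on a cusped hyperbolic manifold the continuous spectrum makes even the definition of $\eta_\alpha$ delicate, and you would need a regularisation that yields this term on the nose.

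Your final remark---that the short-geodesic corrections, weighted by linking parity, match the structure of the refined theorem---is a good heuristic and does suggest that an $\eta$-invariant proof might exist. But turning it into a proof would require substantial new analysis (a rigorous cusp-regularised $\eta$, a gluing formula adapted to Margulis tubes with explicit defect terms, and sharp bounds on those defects), none of which is off-the-shelf. The paper sidesteps all of this by staying in the world of surfaces and Goeritz forms.
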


Here, $\vol(K)$ is the hyperbolic volume of the complement of $K$. Also, $\inj(K)$ is the \emph{injectivity radius} of $S^3 \setminus K$, which we define to be 
\[
\inj(K) = \inf \{\, \inj_x(S^3 \setminus K) : x \in (S^3 \setminus K) \setminus N \,\}.
\]
In the above formula, $N$ is a maximal cusp and $\inj_x(S^3 \setminus K)$ denotes the injectivity radius of a point $x$ in $S^3 \setminus K$. Note that although $\inj(K)^{-3}$ appears in the inequality in Theorem \ref{thm:main}, in practice $\inj(K)$ tends not to be particularly small. (See Figure \ref{fig:injectivity-vol} for example.)
Experimental evidence, which we provide in Section~\ref{sec:conjectures}, suggests that $c_1$ should be quite small: perhaps $c_1 = 0.3$ suffices. This is based on the largest value $0.234$ of $|2 \sigma(K) - \slope(K)| \inj(K)^3 / \vol(K)$ that we managed to obtain by studying a class of knots that are closures of certain braids.

\begin{figure}
	\includegraphics[width = 0.49\textwidth]{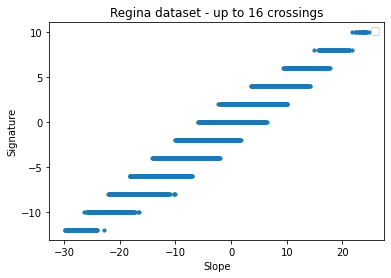}
		\includegraphics[width = 0.49\textwidth]{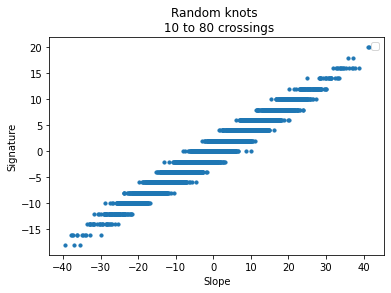}
	\caption{A plot of signature versus slope for knots up to 16 crossings in the Regina census (left) and for a dataset of knots randomly generated by SnapPy having 10 to 80 crossings in their SnapPy-simplified form (right).}
	\label{fig:signature-slope}
\end{figure}

One might wonder whether there is a constant $c_2$ such that 
\[
|2 \sigma(K) - \slope(K)| \leq c_2 \vol(K) 
\]
for every hyperbolic knot $K$. However, we show in Corollary~\ref{cor:counterexample1} that there cannot exist such a constant. We achieve this by exhibiting a sequence of examples that are obtained by twisting 3 strands of a hyperbolic knot.
Nevertheless, we can estimate $\sigma(K)$ in terms of geometric quantities, with an error that is at most a linear function of $\vol(K)$. The main term in this estimate is $\slope(K)/2$, but there are also correction terms that are defined using the complex length of short geodesics. From the complex lengths, the following parameters are computed.

\begin{definition} \label{def:twisting}
	Let $\gamma$ be a geodesic in a hyperbolic 3-manifold with complex length $\cl(\gamma)$. Here, $\mathrm{cl}(\gamma)$ is chosen so that $\im(\cl(\gamma)) \in (-\pi, \pi]$. The \emph{twisting parameter} $\mathrm{tw}(\gamma) = (\mathrm{tw}_p(\gamma), \mathrm{tw}_q(\gamma))$ is the pair $(p,q)$ of coprime integers satisfying the following:
	\begin{enumerate}
		\item $p$ is even and $q$ is odd and non-negative;
		\item subject to this condition, the quantity $|\mathrm{cl}(\gamma) p + 2 \pi i q |$ is minimised;
		\item if there are several values of $(p,q)$ for which this quantity is minimised, then choose the one that is minimal with respect to lexicographical ordering.
	\end{enumerate}
\end{definition}

Consider a hyperbolic knot $K$ in $S^3$. For any $\eps \in \RR_+$ less than the Margulis constant $\eps_3$,
let $\mathrm{OddGeo}(\eps/2)$ denote the set of geodesics with length less than $\eps/2$ and having odd linking number with $K$.
For $p$, $q \in \ZZ_+$, the signature correction term $\kappa(p, q)$ is given by Definition~\ref{def:kappa} and satisfies 
\[
\kappa(p, q) = -\sigma(T(p,q)) - pq/2,
\]
where $T(p,q)$ is the $(p,q)$ torus knot.
Then we have the following refinement of Theorem~\ref{thm:main}, which we prove in Section~\ref{sec:signature-correction-proof}, that does not depend on the injectivity radius:

\begin{theorem}
	\label{Thm:SignatureCorrection}
	Let $\eps_3$ be the Margulis constant, and let $\eps \in (0, \eps_3)$. Then
	there is a constant $c_4$ (depending on $\eps$) such that for any hyperbolic knot $K$, the quantities $\sigma(K)$ and
	\[
	\slope(K)/2 - \sum_{\gamma \in \mathrm{OddGeo}(\eps/2)} \kappa(\mathrm{tw}_p(\gamma), \mathrm{tw}_q(\gamma)) 
	\]
	differ by at most $c_4 \vol(K)$. 
\end{theorem}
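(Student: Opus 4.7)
The strategy refines the proof of Theorem~\ref{thm:main}: rather than bounding the pointwise integrand of the formula relating $2\sigma(K)$ and $\slope(K)$ by its worst-case value on all of $S^3 \setminus K$, we isolate and compute the contribution of each Margulis tube around a short geodesic, which is precisely where the $\inj(K)^{-3}$ blow-up in Theorem~\ref{thm:main} originates.

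By the Margulis lemma, for $\eps < \eps_3$ the $\eps/2$-thin part of $S^3 \setminus K$ away from the maximal cusp consists of pairwise disjoint embedded Margulis tubes $T_\gamma$, one for each geodesic $\gamma$ of length less than $\eps/2$. Outside these tubes and the maximal cusp, the injectivity radius is bounded below by a constant depending only on $\eps$. Applying the argument behind Theorem~\ref{thm:main} only on this thick complement yields a contribution to $|2\sigma(K) - \slope(K)|$ bounded by a constant depending on $\eps$ times $\vol(K)$. It therefore suffices to compute the contribution from each tube $T_\gamma$ exactly.

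When $\lk(K,\gamma)$ is even, the preimage of $\gamma$ in the double cover of $S^3$ branched over $K$ consists of two disjoint lifts whose contributions to the signature defect cancel up to an $O(\vol T_\gamma)$ error, and are thus absorbed into the linear-volume bound. When $\lk(K,\gamma)$ is odd, the preimage is a single tube double-covering $T_\gamma$, whose meridian slope — expressed in the hyperbolic frame on $\partial T_\gamma$ that is determined by $\cl(\gamma)$ — is exactly $(p,q) = \tw(\gamma)$. A direct computation combining a Neumann-Zagier-style expansion of the $K$-cusp shape change (producing the $pq$ contribution to $\slope(K)$) with the signature of the $(p,q)$-Dehn surgery cobordism on the lifted tube (producing the $\sigma(T(p,q))$ contribution to $\sigma(K)$, via the classical signature formula for torus knots realised as plumbing boundaries) shows that the net tube contribution to $2\sigma(K) - \slope(K)$ equals $2\sigma(T(p,q)) + pq = -2\kappa(p,q)$, up to an $O(\vol T_\gamma)$ remainder. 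The parity conditions $p$ even, $q$ odd in Definition~\ref{def:twisting} are exactly the ones that make the lifted slope well-defined in the double branched cover, and the minimisation condition singles out the geometrically canonical representative among all such $(p,q)$.

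Summing the thick-part bound with the tube contributions, rearranging, and dividing by $2$ gives the claimed inequality. The main obstacle is the precise identification of the odd-linking tube contribution as $-2\kappa(\tw(\gamma))$: one must extract, from what appears in the proof of Theorem~\ref{thm:main} as a locally unbounded integrand on $T_\gamma$, a leading term equal to $-2\kappa(\tw(\gamma))$ with error $O(\vol T_\gamma)$. This requires both aligning the hyperbolic frame on $\partial T_\gamma$ (which determines $\tw(\gamma)$) with the topological framing on $\partial T_\gamma \subset S^3$ used to express $\sigma(K)$ via the double branched cover, and carrying out the Dedekind-sum / torus-knot-signature computation that converts the resulting Dehn-surgery data into the closed form $\sigma(T(p,q)) + pq/2$.
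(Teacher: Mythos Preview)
Your high-level strategy --- thick/thin decomposition, with each Margulis tube handled separately --- is correct and is what the paper does. But your account of the mechanism misreads the proof of Theorem~\ref{thm:main}: there is no ``pointwise integrand'' there. That proof is purely combinatorial. A J{\o}rgensen--Thurston/Voronoi triangulation of the exterior with $O(\vol(K)\inj(K)^{-3})$ tetrahedra (Proposition~\ref{prop:triangulation}) yields an unoriented spanning surface $F$ of comparable complexity (Theorem~\ref{thm:crosscap}), and Gordon--Litherland (Theorem~\ref{thm:Gordon-Litherland}) converts $b_1(F)$ into a bound on $|2\sigma(K)-\slope(K)|$. The paper's proof of Theorem~\ref{Thm:SignatureCorrection} refines \emph{this} construction: triangulate only the thick part $M_{[3\eps/4,\infty)}$ with $O(\vol(K))$ tetrahedra, build a normal surface there whose boundary on each tube torus $T_i$ has slope $\tw(\gamma_i)$, cap it off inside each solid torus by the explicit surface of Lemma~\ref{Lem:SurfaceSolidTorus}, and observe that the resulting Goeritz form is block-triangular with the $V_i$-block contributing signature $-\kappa(\tw(\gamma_i))$ up to bounded error (Lemmas~\ref{Lem:TorusKnotGoeritz} and~\ref{Lem:SurfaceSolidTorus}). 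A genuinely nontrivial step you do not mention is Lemma~\ref{Lem:TwistSlopeShort}: one must show that the slope $\tw(\gamma)$ has length $O(\Area T)$ on the tube boundary, so that the boundary curve can be carried by the thick-part triangulation without blowing up the surface's Euler characteristic. This is where the minimisation condition in Definition~\ref{def:twisting} is actually used, not the branched-cover framing you suggest.

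Your proposed alternative via the double branched cover and signature defects is plausible in spirit but is not a proof as written. The assertion that the lifted meridian slope on the double-covered tube ``is exactly $(p,q)=\tw(\gamma)$'' conflates a purely hyperbolic quantity (the minimiser of $|\cl(\gamma)p + 2\pi i q|$) with a topological framing in the branched cover; you give no argument linking them. The claim that a Neumann--Zagier expansion contributes ``$pq$ to $\slope(K)$'' from each tube is unclear, since $\slope(K)$ is read off the cusp of $K$, not the tube boundaries, and no localisation of it to the tubes is established. The even-linking cancellation is likewise asserted without justification. If an analytic/eta-invariant approach along these lines can be made rigorous it would be a genuinely different proof from the paper's surface-and-Goeritz argument, but the key identifications you rely on are exactly the hard part and are currently missing.
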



Figures~\ref{fig:example1} and~\ref{fig:example2} illustrate the relationship between signature and slope  in Theorem~\ref{thm:main} for the knots $6_1$ and 12a52, respectively.

\begin{figure}[h]
	\includegraphics[width=0.3\textwidth]{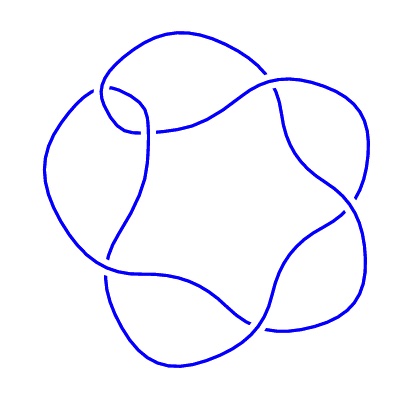}
	\includegraphics[width=0.4\textwidth]{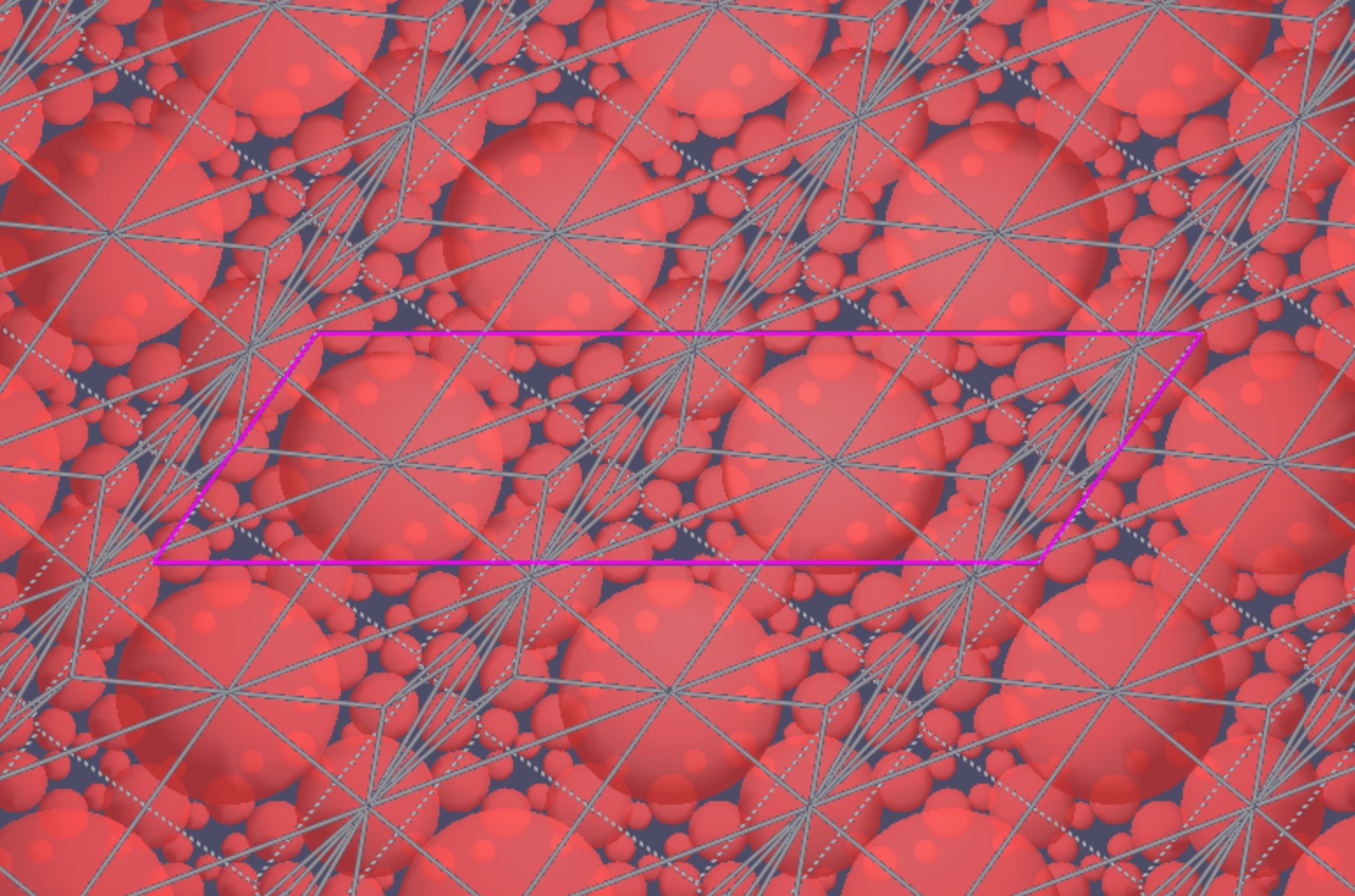}
	\caption{Left: The stevedore knot $6_1$, which is a slice knot. Right: Its cusp torus, as provided by SnapPy~\cite{SnapPy}. The longitude is $3.9279$ and the meridian is $0.7237 + 1.0160i$. Its natural slope is $1.8267$ and its signature is $0$.}
	\label{fig:example1}
\end{figure}

\begin{figure}
	\includegraphics[width=0.3\textwidth]{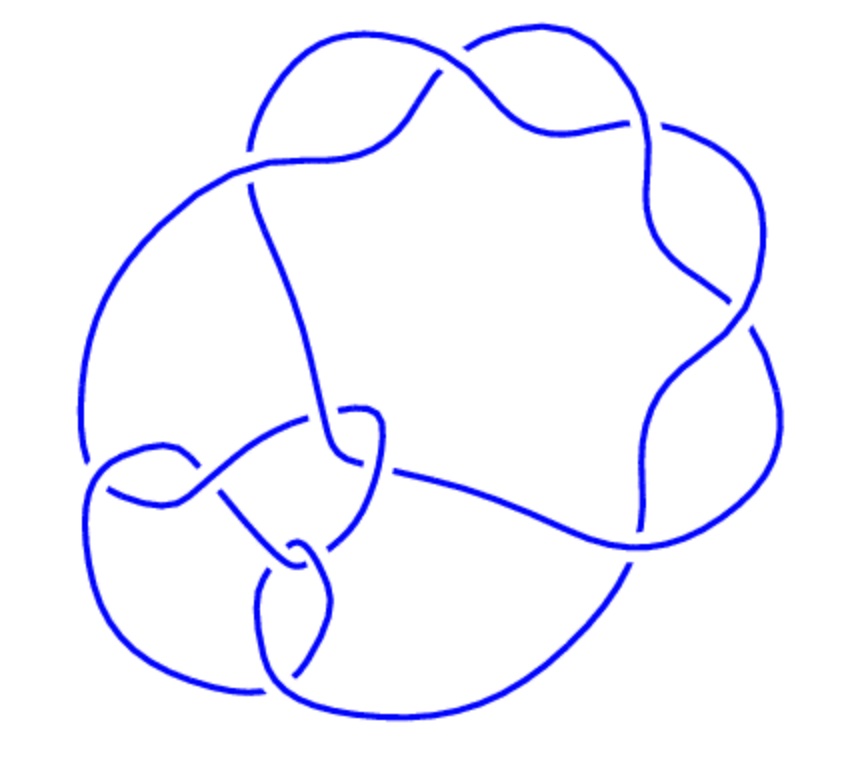}
	\includegraphics[width=0.4\textwidth]{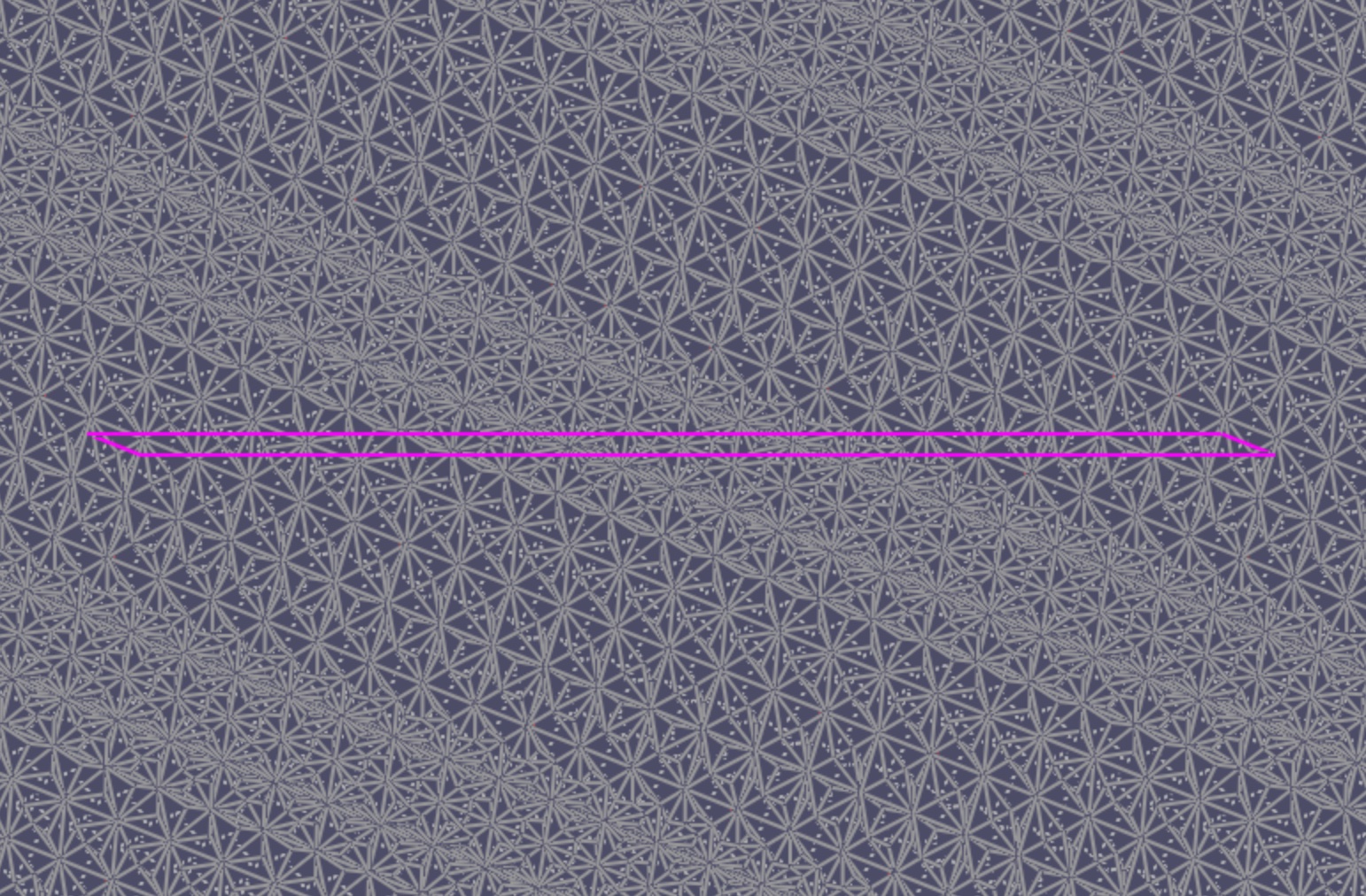}
	\caption{Left: The knot 12a52. Right: Its cusp torus. The longitude is $27.7228$ and the meridian is $-1.2838 + 0.5145i$. Its natural slope is $-18.6064$ and its signature is $-8$. Note how far the parallelogram is from being right-angled; this is the defining feature of having very positive or very negative slope.}
	\label{fig:example2}
\end{figure} 

Theorem~\ref{thm:main} has applications in low-dimensional topology. On the one hand, the signature of $K$ controls the cusp shape, which in turn has consequences for the possible exceptional surgeries on $K$. On the other hand, the cusp shape controls the signature, which has consequence for the 4-ball genus of $K$. We now provide these applications.

\subsection{An application to Dehn surgery}

Cusp geometry is well known to control the exceptional surgeries on a knot $K$. Recall that a slope $s$ on $\partial N(K)$ is said to be \emph{exceptional} if the manifold $K(s)$ obtained by Dehn filling along $s$ does not admit a hyperbolic structure. 

The \emph{length} of a slope $s = q/p \in \QQ$, denoted $\ell(s)$, is defined to be the length of any geodesic representative of $s = p \lambda + q \mu$ in the boundary of the maximal cusp. A theorem of Agol~\cite{Agol-Dehn} and Lackenby~\cite{Lackenby-Dehn} states that if $\ell(s) > 6$, then $s$ is not exceptional.

We relate slope length to natural slope, using the following simple geometric lemma, which we will prove in Section \ref{sec:slope}.

\begin{lemma}\label{lem:length-slope}
If $K$ is a hyperbolic knot, then the length of the slope $q/p$ satisfies
\[
\ell(q/p) \geq |p \slope(K) + q|.
\]
Hence, if $q/p$ is exceptional, then
\[
q/p \in [-\slope(K) - 6/p, -\slope(K) + 6/p].
\]
\end{lemma}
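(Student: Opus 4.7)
The plan is to compute the slope length directly as a complex absolute value in the cusp torus, bound it below by the absolute value of its real part, and then invoke a standard lower bound on the meridian translation to drop a factor of $|\mu|$. The second statement will follow immediately from the Agol--Lackenby $6$-theorem.

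Identifying the boundary of the maximal cusp with $\CC/\Lambda$ where $\Lambda = \langle \mu, \lambda\rangle$, the geodesic representative of the slope $q/p$ is $p\lambda + q\mu$, so
\[
\ell(q/p) \;=\; |p\lambda + q\mu| \;=\; |\mu|\cdot\bigl|p(\lambda/\mu) + q\bigr|.
\]
The real part of $p(\lambda/\mu) + q$ equals $p\,\Re(\lambda/\mu) + q = p\,\slope(K) + q$, and the modulus of a complex number dominates the absolute value of its real part, so
\[
\ell(q/p) \;\geq\; |\mu|\cdot|p\,\slope(K) + q|.
\]
To get rid of the factor $|\mu|$, I would invoke the standard fact that for every hyperbolic knot $K \subset S^3$, the meridian translation in the maximal cusp satisfies $|\mu| \geq 1$, with the figure-eight knot being the extremal case. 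This gives the first inequality of the lemma.

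For the exceptional slope assertion, the Agol--Lackenby $6$-theorem says that any exceptional slope has length at most $6$. Combining with the first inequality yields $|p\,\slope(K) + q| \leq 6$; dividing by $p$ (which may be taken positive since $q/p = (-q)/(-p)$) gives $|\slope(K) + q/p| \leq 6/p$, equivalent to the asserted interval. There is no real obstacle here: the only nontrivial ingredient beyond definitions is the lower bound $|\mu| \geq 1$ for hyperbolic knots in $S^3$, and everything else is direct computation together with a citation of the $6$-theorem.
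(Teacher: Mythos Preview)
Your proof is correct and follows essentially the same approach as the paper: both compute $\ell(q/p)=|p\lambda+q\mu|$, use the identity $\slope(K)=\Re(\lambda/\mu)$, invoke $|\mu|\ge 1$, and then apply the Agol--Lackenby $6$-theorem. Your packaging is in fact slightly cleaner: the paper squares both sides and compares the expansions term by term, whereas your factorisation $|p\lambda+q\mu|=|\mu|\,|p(\lambda/\mu)+q|$ together with $|z|\ge|\Re z|$ gives the inequality in one line.
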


Given that $\slope(K)$ and $2 \sigma(K)$ are highly correlated, one would therefore expect that any exceptional slope $q/p$ should lie within a short interval around $-2 \sigma(K)$. It is also known that $|p| \le 8$, by a theorem of Lackenby and Meyerhoff~\cite{LackenbyMeyerhoff}. Hence, we obtain a bounded set of slopes that contains all the exceptional ones, and that is defined in terms of the signature. 

An interesting case is the $(-2,3,7)$-pretzel knot $12n242$. This has signature $-8$ and slope approximately $-18.215$. It has $7$ exceptional slopes: $16$, $17$, $18$, $37/2$, $19$, and $20$. Observe that these slopes are concentrated in a short interval $[16,20]$ that contains both $-\slope(K)$ and $-2 \sigma(K)$.
This close correlation between the exceptional slopes and $-2 \sigma(K)$ seems to be a phenomenon that had not previously been observed. Specifically, we have the following consequence of our main theorem. 

\begin{corollary}
If $K$ is a hyperbolic knot and $q/p$ is a slope satisfying
\[
|q/p + 2\sigma(K)| > \left ( 6/|p| \right) + c_1 \vol(K) \inj(K)^{-3} \quad \text{or} \quad |p| > 8,
\]
then the manifold $K(q/p)$ obtained by $q/p$ Dehn surgery along $K$ is hyperbolic.
\end{corollary}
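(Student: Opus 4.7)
The plan is to prove the contrapositive: assume $K(q/p)$ is not hyperbolic, i.e.\ that the slope $q/p$ is exceptional, and derive that both $|p| \leq 8$ and $|q/p + 2\sigma(K)| \leq 6/|p| + c_1 \vol(K) \inj(K)^{-3}$.

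First I would invoke the Lackenby--Meyerhoff theorem~\cite{LackenbyMeyerhoff} which, as recalled in the paragraph preceding the corollary, states that an exceptional slope on a hyperbolic knot satisfies $|p| \leq 8$. This immediately handles one of the two disjuncts. Next I would apply \reflem{length-slope}: exceptionality forces $\ell(q/p) \leq 6$ by the Agol--Lackenby $6$-theorem, and combined with the inequality $\ell(q/p) \geq |p \, \slope(K) + q|$ from the lemma this yields
\[
\left| \slope(K) + \tfrac{q}{p} \right| \leq \frac{6}{|p|}.
\]
At this point the heavy lifting is already done; what remains is a triangle inequality bridging $\slope(K)$ and $2\sigma(K)$ via Theorem~\ref{thm:main}:
\[
\left| \tfrac{q}{p} + 2\sigma(K) \right| \leq \left| \tfrac{q}{p} + \slope(K) \right| + \left| \slope(K) - 2\sigma(K) \right| \leq \frac{6}{|p|} + c_1 \vol(K) \inj(K)^{-3}.
\]
Taking the contrapositive of the conjunction of this bound with $|p| \leq 8$ gives exactly the hypothesis of the corollary, and we conclude that $K(q/p)$ must be hyperbolic.

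There is essentially no obstacle here: the corollary is a formal consequence of the combination of \reflem{length-slope}, Theorem~\ref{thm:main}, and the Lackenby--Meyerhoff bound on $|p|$, assembled by a single triangle inequality. The only care required is in the logical structure of the hypothesis, which is a disjunction, so that negating it produces the conjunction $|p| \leq 8$ together with $|q/p + 2\sigma(K)| \leq 6/|p| + c_1 \vol(K) \inj(K)^{-3}$ that the argument above rules out under the assumption that $q/p$ is exceptional.
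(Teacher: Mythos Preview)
Your argument is correct and follows exactly the approach the paper outlines in the discussion preceding the corollary: combine the Lackenby--Meyerhoff bound $|p|\le 8$, the second part of \reflem{length-slope} (which encodes the Agol--Lackenby $6$-theorem), and Theorem~\ref{thm:main} via the triangle inequality. The paper does not spell out a formal proof, but your write-up is precisely the intended one.
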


Theorem~\ref{Thm:SignatureCorrection} gives a similar bound on slopes resulting in hyperbolic surgeries that does not involve $\inj(K)$. 

\subsection{An application to 4-ball genus}

One of the most important 4-dimensional quantities associated to a knot $K$ is its \emph{4-ball genus} $g_4(K)$. This is defined to be the minimal possible genus of a smoothly embedded compact orientable surface in the 4-ball $B^4$ with boundary $K$. One can also define the \emph{topological 4-ball genus} $g_4^{\mathrm{top}}(K)$ by considering locally-flat topologically embedded compact orientable surfaces with boundary $K$. The inequality $g_4(K) \geq g_4^{\mathrm{top}}(K)$ is immediate.

The following result provides a lower bound on $g_4^{\mathrm{top}}(K)$ in terms of purely hyperbolic data. This follows immediately from our main theorem together with the well-known inequality $g_4^{\mathrm{top}}(K) \geq |\sigma(K)|/2$. 

\begin{corollary}
The topological 4-ball genus $g_4^{\mathrm{top}}(K)$ of a hyperbolic knot $K$ satisfies
\[
g_4^{\mathrm{top}}(K) \geq |\slope(K)| /4  - (c_1/4)  \vol(K) \inj(K)^{-3}.
\]
\end{corollary}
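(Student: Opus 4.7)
The proof will be essentially a one-line combination of Theorem~\ref{thm:main} with a classical 4-dimensional fact. The plan is first to recall the Kauffman--Taylor inequality $g_4^{\mathrm{top}}(K) \geq |\sigma(K)|/2$, which holds for the locally-flat topological category (and is typically proved by looking at the signature of the intersection form of the double cover of $B^4$ branched over a spanning surface, whose second Betti number is bounded in terms of the genus of the surface). With that in hand, it then suffices to control $|\sigma(K)|$ from below by $|\slope(K)|/2$ up to the stated error term.

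The latter is immediate from Theorem~\ref{thm:main}: the inequality $|2\sigma(K) - \slope(K)| \leq c_1 \vol(K) \inj(K)^{-3}$ gives, by the reverse triangle inequality,
\[
2|\sigma(K)| \;\geq\; |\slope(K)| - c_1\vol(K)\inj(K)^{-3},
\]
so dividing by $4$ yields
\[
|\sigma(K)|/2 \;\geq\; |\slope(K)|/4 - (c_1/4)\vol(K)\inj(K)^{-3}.
\]
Chaining this with $g_4^{\mathrm{top}}(K) \geq |\sigma(K)|/2$ delivers the claimed bound.

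Since the entire argument is this short chain of inequalities, there is no genuine obstacle; the only thing to double-check is that the Kauffman--Taylor bound is indeed stated for $g_4^{\mathrm{top}}$ rather than only for the smooth 4-ball genus, which is standard. The substantive content of the corollary therefore lies entirely in Theorem~\ref{thm:main}, which supplies the translation between a hyperbolic quantity ($\slope(K)$) and the topological/4-dimensional quantity ($\sigma(K)$).
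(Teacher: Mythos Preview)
Your proof is correct and matches the paper's own argument essentially verbatim: the paper simply states that the corollary follows immediately from Theorem~\ref{thm:main} together with the well-known inequality $g_4^{\mathrm{top}}(K) \geq |\sigma(K)|/2$, which is precisely the chain of inequalities you wrote out.
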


This corollary seems to be the first time that information about the 4-ball genus has been obtained in terms of hyperbolic geometry. Again, Theorem~\ref{Thm:SignatureCorrection} gives a similar lower bound on $g_4^{\mathrm{top}}(K)$ that does not involve $\inj(K)$.

\subsection{Spanning surfaces}

Theorem \ref{thm:main} is proved using a new construction of spanning surfaces with a specified slope. It is of independent interest.

\begin{theorem}\label{thm:crosscap}
	There is a constant $c_3$ such that every hyperbolic knot $K$ in $S^3$ has an unoriented spanning surface $F$ satisfying
	\[
	|\chi(F)| \leq c_3  \vol(K) \inj(K)^{-3}.
	\]
	Moreover, the boundary slope of this surface is $n/1$, where $n$ is an even integer that is closest to $\slope(K)$.
\end{theorem}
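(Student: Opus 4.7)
The plan is to build $F$ by fixing its boundary as a specific geodesic on the maximal cusp torus and extending it across a bounded-complexity triangulation of the knot exterior, with hyperbolic geometry controlling the complexity of the extension.

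Let $N$ be the maximal cusp of $K$ and let $M = S^3 \setminus \interior N(K)$ be the knot exterior. I would choose $n$ to be the even integer closest to $\slope(K)$, so $|n-\slope(K)|\le 1$, and fix the closed geodesic $\gamma \subset \partial N$ representing $\lambda + n\mu$. Evenness of $n$ is \emph{essential}: since $H_1(M;\ZZ)\cong\ZZ\langle\mu\rangle$ with $[\lambda]=0$, the class $[\lambda + n\mu]=n[\mu]$ vanishes in $H_1(M;\ZZ/2)$ exactly when $n$ is even, which is the homological condition for $\gamma$ to bound an unoriented embedded spanning surface for $K$. I would then construct a bounded-geometry triangulation $\cT$ of the compact piece $M\setminus\interior N$ having $\gamma$ as an edge cycle in its $1$-skeleton on $\partial N$, and with at most $C_0\,\vol(K)\,\inj(K)^{-3}$ tetrahedra of uniformly bounded hyperbolic diameter. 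Such a $\cT$ is obtained by a standard ball-packing argument: by the definition of $\inj(K)$, open balls of radius $\inj(K)/2$ centred in $M\setminus\interior N$ embed in $M$ and have volume at least a constant multiple of $\inj(K)^3$, so a maximal packing has $O(\vol(K)\,\inj(K)^{-3})$ balls; doubling the radii yields a covering whose nerve, suitably subdivided on $\partial N$ so that $\gamma$ lies in the $1$-skeleton, produces $\cT$.

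The last step is to realize $[\gamma]\in H_1(M;\ZZ/2)$ as an embedded normal surface $F$ in $(M,\cT)$ with $\partial F = \gamma$. The $\ZZ/2$-nullity ensures that \emph{some} embedded spanning surface exists; I would push it into normal form relative to $\cT$ and simplify. The key claim is that the number of normal disks of $F$ in each tetrahedron can be reduced to a universal constant via successive compressions, isotopies, and destabilizations, using Gauss--Bonnet or isoperimetric area bounds on a least-area representative inside each tetrahedron. Summing over the $O(\vol(K)\,\inj(K)^{-3})$ tetrahedra then yields
\[
|\chi(F)| \le c_3\,\vol(K)\,\inj(K)^{-3},
\]
with boundary slope $n/1$ by construction.

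The hard part will be precisely this complexity-reduction step. A priori, a normal spanning surface with fixed boundary in a triangulation can have arbitrarily many parallel disks per tetrahedron, so the reduction must genuinely exploit the uniform bound on the geometry of the tetrahedra of $\cT$. I would anticipate that the technical heart of the argument is a local lemma: in any hyperbolic tetrahedron of bounded diameter, an embedded normal surface with prescribed intersection pattern on the boundary can be isotoped so that each normal disk type appears at most a universal constant number of times. Chaining this lemma across tetrahedra and verifying consistency along shared faces then produces the desired embedded spanning surface $F$.
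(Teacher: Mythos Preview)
Your triangulation step via ball-packing is essentially the same as the paper's (which invokes the J{\o}rgensen--Thurston Voronoi construction), and your homological observation that even $n$ is needed for an unoriented spanning surface is correct. The gap is in the surface-construction step.

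You propose to take \emph{some} spanning surface, push it into normal form, and then simplify it tetrahedron-by-tetrahedron using a ``local lemma'' about bounded-diameter hyperbolic tetrahedra. This does not work. The number of parallel normal disks of a given type in a tetrahedron is a purely combinatorial datum, completely decoupled from the hyperbolic metric on the tetrahedron; a tetrahedron of any fixed diameter can carry arbitrarily many parallel normal triangles, and no isotopy or compression confined to that tetrahedron can reduce the count without altering the boundary pattern on the faces, which would propagate changes to neighbouring tetrahedra. So neither Gauss--Bonnet nor an isoperimetric/least-area argument gives the local bound you want, and the ``chaining across tetrahedra'' step cannot be made consistent.

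The paper sidesteps this entirely with a $\ZZ_2$-cocycle trick. One takes any spanning surface $S$ with $\partial S=\nu$, records only the parity $c(e)=|S\cap e|\bmod 2$ on each edge, and then \emph{rebuilds} a new surface $F$ from the cocycle: in every tetrahedron one connects the midpoints of the edges with $c(e)=1$, producing at most one normal triangle or quadrilateral per tetrahedron. The crucial companion fact (Proposition~\ref{prop:triangulation}(\ref{it:nu}) in the paper) is that the triangulation is arranged so that the geodesic $\nu$ is a normal curve meeting each boundary edge at most once; this guarantees $\partial F=\nu$. The Euler-characteristic bound $|\chi(F)|\le t$ is then a one-line face/edge count. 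So the missing idea is not a geometric simplification lemma but the mod-$2$ reconstruction, together with arranging $\nu$ to be normal of edge-weight $\le 1$ rather than an edge cycle in the $1$-skeleton.
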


We prove this in Section~\ref{sec:crosscap}.
The \emph{crosscap number} of a knot $K$ is the minimum of $b_1(F)$ for $F$ an unoriented spanning surface of $K$. When $K$ is hyperbolic, the above theorem gives an upper bound on a version of the crosscap number where $\d F$ has slope $n/1$. 

Theorem \ref{thm:main} is proved by combining this result with a theorem of Gordon and Litherland \cite{Gordon-Litherland}, which asserts that one can compute the signature of a knot $K$ using any spanning surface $F$ for $K$; see Theorem~\ref{thm:Gordon-Litherland}.

Note that slope also gives a lower bound on the Seifert genus: 
\[
\frac{1}{4\pi} |\slope(K)| + \frac12 \le g(K); 
\]
see Proposition~\ref{prop:genus-bound}.

\subsection{Highly twisted knots}

In Section~\ref{sec:highly-twisted}, we show the following result for highly twisted knots:

\begin{theorem}
	\label{Thm:SignatureSlopeTwisting}
	Let $K$ be a knot in the 3-sphere, and let $C_1, \dots, C_n$ be a collection of disjoint simple closed curves in the complement of $K$ that bound disjoint discs. Suppose that $S^3 \setminus (K \cup C_1 \cup \dots \cup C_n)$ is hyperbolic. Let $K(q_1, \dots, q_n)$ be the knot obtained from $K$ by adding $q_i$ full twists to the strings going through $C_i$, for each $i \in \{1, \dots, n\}$. Let $\ell_i$ be the linking number between $C_i$ and $K$, when they are both given some orientation. Suppose that $\ell_1, \dots, \ell_m$ are even and $\ell_{m+1}, \dots, \ell_n$ are odd. Then there is a constant $k$, depending on $K$ and $C_1, \dots, C_n$, such that the following hold, provided each $|q_i|$ is sufficiently large:
	\[
	\left | \slope(K(q_1, \dots, q_n)) + \sum_{i=1}^n \ell_i^2 q_i \right | \leq k;
	\]
	\[
	\left | \sigma(K(q_1, \dots, q_n)) + \left (  \frac{1}{2} \sum_{i=1}^m \ell_i^2 q_i + \frac{1}{2} \sum_{i=m+1}^n (\ell_i^2 -1) q_i \right ) \right | \leq k.
	\]
\end{theorem}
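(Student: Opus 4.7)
The plan is to realise $K(q_1, \dots, q_n)$ as the image of $K$ under $-1/q_i$ Dehn surgery on $C_i$ in the hyperbolic manifold $M = S^3 \setminus (K \cup C_1 \cup \dots \cup C_n)$. By Thurston's hyperbolic Dehn surgery theorem, for all sufficiently large $|q_i|$ the filled manifold $S^3 \setminus K(q_1, \dots, q_n)$ is hyperbolic and its hyperbolic structure converges geometrically to that of $M$; as a consequence, the cusp shape of the $K$-cusp, the hyperbolic volume, and the injectivity radius away from the newly created core geodesics $\gamma_1, \dots, \gamma_n$ of the fillings are all uniformly bounded. Each $\gamma_i$ has linking number with $K(q_1, \dots, q_n)$ equal to $\pm \ell_i$, so of the same parity as $\ell_i$, and its complex length satisfies the asymptotic $\cl(\gamma_i) \sim -2\pi i/q_i$ as $|q_i| \to \infty$.

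For the slope bound, let $\mu$ and $\lambda$ denote the meridian and zero-framed longitude of $K$ regarded as curves on $\partial N(K) \subset M$. The Rolfsen twist formula, which records how the zero-framing of a knot shifts under Dehn surgery on a disjoint linking unknot, implies that the zero-framed longitude of $K(q_1, \dots, q_n)$ in the refilled $S^3$ is homologous to $\lambda - \sum_i \ell_i^2 q_i \cdot \mu$ on the same torus. Moreover, the Euclidean structure on this torus converges to the one inherited from $M$, in which $\Re(\lambda/\mu)$ is a fixed real number depending only on $M$. Hence $\slope(K(q_1, \dots, q_n)) = \Re(\lambda/\mu) - \sum_i \ell_i^2 q_i$ up to a bounded convergence error, giving the first inequality.

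For the signature bound, I would apply Theorem~\ref{Thm:SignatureCorrection} to $K(q_1, \dots, q_n)$. Fix some $\eps \in (0, \eps_3)$ smaller than the injectivity radius of $M$. Geometric convergence forces $\mathrm{OddGeo}(\eps/2)$ inside the filled manifold to consist, for $|q_i|$ large, of exactly the cores $\gamma_{m+1}, \dots, \gamma_n$ having odd $\ell_i$, while the volume remains uniformly bounded. From $\cl(\gamma_i) \sim -2\pi i/q_i$ one obtains $|\cl(\gamma_i)p + 2\pi i q| \sim (2\pi/|q_i|)\cdot |q q_i - p|$; minimising over coprime $(p,q)$ with $p$ even and $q$ odd non-negative, and taking the lex-minimal pair, yields $\tw_p(\gamma_i) = q_i + O(1)$ and $\tw_q(\gamma_i) = 1$. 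Since $T(p,1)$ is the unknot, $\kappa(p,1) = -p/2$, so each correction term contributes $-q_i/2 + O(1)$. Substituting into Theorem~\ref{Thm:SignatureCorrection} alongside the slope estimate gives
\[
\sigma(K(q_1, \dots, q_n)) = \tfrac{1}{2}\slope(K(q_1, \dots, q_n)) + \tfrac{1}{2}\sum_{i=m+1}^n q_i + O(1) = -\tfrac{1}{2}\sum_{i=1}^m \ell_i^2 q_i - \tfrac{1}{2}\sum_{i=m+1}^n (\ell_i^2 - 1) q_i + O(1),
\]
which rearranges to the second inequality.

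The main obstacle is the asymptotic analysis needed to pin down $\tw_p(\gamma_i) = q_i + O(1)$: the leading formula $\cl(\gamma_i) \sim -2\pi i/q_i$ carries higher-order corrections coming from the cusp structure of $M$ at $C_i$, and these must be shown to be uniformly bounded so that they can be absorbed into the constant $k$. One must also rule out the appearance of any extra short odd-linking geodesics in the filled manifolds. Both points follow from an effective form of Thurston's hyperbolic Dehn surgery theorem via the perturbative holonomy equations at the cusps of $M$, applied with constants depending only on $K$ and $C_1, \dots, C_n$.
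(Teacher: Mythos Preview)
Your argument for the slope inequality is essentially the paper's: geometric convergence under hyperbolic Dehn filling fixes the cusp shape of $K$ up to a bounded error, while the Rolfsen twist shifts the longitude by $\sum_i \ell_i^2 q_i$ meridians.

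For the signature inequality you take a genuinely different route. The paper does \emph{not} invoke Theorem~\ref{Thm:SignatureCorrection}; instead it builds an explicit spanning surface for $K(q_1,\dots,q_n)$ by starting from a fixed spanning surface for an auxiliary link in $S^3\setminus(C_1\cup\dots\cup C_n)$, performing the surgeries, and then capping off inside each filled solid torus $N(C_i)$ for $i>m$ with a concrete M\"obius-band-like piece whose Goeritz contribution is $\pm q_i/2+O(1)$. Gordon--Litherland's formula then gives the result directly. Your approach instead realises the theorem as a corollary of Theorem~\ref{Thm:SignatureCorrection} together with the asymptotics of the core geodesics, which is conceptually appealing: it explains \emph{why} the odd-$\ell_i$ circles produce the $(\ell_i^2-1)$ coefficient, namely because they become precisely the short odd-linking geodesics whose correction terms enter $\mathrm{OddGeo}(\eps/2)$. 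The paper's approach, by contrast, is self-contained, uses only the elementary surface construction, and is logically prior to Theorem~\ref{Thm:SignatureCorrection} in the exposition.

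One point to tighten: your assertion $\tw_q(\gamma_i)=1$ does not follow from the leading term $\cl(\gamma_i)\sim -2\pi i/q_i$ alone, because the real length $\Re(\cl(\gamma_i))$ is of order $1/q_i^2$, the same order as the residual imaginary mismatch, so several odd values of $q$ can tie at leading order. What you actually need, and what does hold, is that $\tw_q(\gamma_i)$ stays bounded (since larger $q$ forces $|p|\sim q\,|q_i|$ and hence a larger $a^2p^2$ contribution with $a=\Re(\cl(\gamma_i))\asymp 1/q_i^2$ bounded below by the fixed cusp area of $C_i$), while $\tw_p/\tw_q\to q_i$. Then the recursion in Definition~\ref{def:kappa} gives $\kappa(p,q)=-p/(2q)+O_q(1)$ for $q$ odd, so $\kappa(\tw_p(\gamma_i),\tw_q(\gamma_i))=-q_i/2+O(1)$ regardless of which bounded odd $\tw_q$ is selected. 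With that adjustment your argument goes through.
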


The slight difference between the behaviour of $\sigma(K(q_1, \dots, q_n))$ and the behaviour of $\slope(K(q_1, \dots, q_n))/2$ as the $q_i$ tend to infinity enables us to construct families of knots that show the injectivity radius cannot be dropped from Theorem~\ref{thm:main}.

\subsection{Methodology}
One of the novel aspects of this work was the use of machine learning. We embarked with the aim of discovering new relationships between various 3-dimensional invariants. By using machine learning, we observed an unexpected non-linear relationship between $\sigma(K)$ and $\Re(\mu)$, the real part of the meridional translation $\mu$. This led us to define the natural slope, which we observed to have a strong linear correlation with $\sigma(K)$. Theorems~\ref{thm:main}  and~\ref{Thm:SignatureCorrection} are the results of our attempts to prove this correlation.

\section{Hyperbolic knots and natural slope}\label{sec:slope}

A knot $K$ is hyperbolic if its complement $S^3 \setminus K$ admits a complete finite-volume hyperbolic metric.
By the Mostow rigidity theorem~\cite{Mostow}, the hyperbolic structure is unique up to isometry, hence every geometric invariant of the hyperbolic structure on $S^3 \setminus K$ is a topological invariant of the knot. For example, the volume $\vol(K) := \vol(S^3 \setminus K)$ and the injectivity radius $\inj(K)$ defined in the introduction are such invariants.

For a pair of coprime integers $p$, $q$, the \emph{torus knot} $T(p, q)$ is one that can be drawn on the surface of the standard torus in the 3-sphere, and winds $p$ times in the longitude direction and $q$ times along the meridian.
Given a knot $K$ in $S^3$ and a knot $K'$ in the solid torus $S^1 \times D^2$, one can form the satellite of $K$ with pattern $K'$ by mapping the solid torus in a neighbourhood of $K$, and considering the image of $K'$. By the work of Thurston~\cite{Morgan-hyperbolization}, a knot is hyperbolic if and only if it is not a torus knot or a satellite knot. In particular, every hyperbolic knot is prime; i.e., not the connected sum of two non-trivial knots. In other words, one can build all knots from hyperbolic knots and torus knots using satellite operations.

\begin{definition}
For any hyperbolic knot $K$, the end of $S^3 \setminus K$ has a neighbourhood called a \emph{cusp}. The boundary $\d N$ of a maximal cusp neighbourhood $N \subset S^3 \setminus K$ is a Euclidean torus. Identify $\d N$ with $\CC / \Lambda$, where $\CC$ is the complex plane and $\Lambda$ is a lattice in $\CC$. We arrange this identification so that the longitude lifts to a straight line in $\CC$ starting at $0$ and ending at some $\lambda \in \RR_{> 0}$. This is the knot's \emph{longitudinal translation}. Given this normalisation, the meridian lifts to a straight line starting at $0$ and ending at some complex number $\mu$ with $\Im(\mu) > 0$. This is the \emph{meridional translation} of $K$.
\end{definition}

We remark that the real part of meridional translation $\Re(\mu)$ in the KnotInfo~\cite{knotinfo} data set for knots with at most 12 crossings is listed without signs. However, SnapPy~\cite{SnapPy} does compute the sign for hyperbolic knots.

Note that $|\mu| \le 6$, where $|\mu|$ denotes the length of the meridian. Indeed, by work of Agol~\cite{Agol-Dehn} and Lackenby~\cite{Lackenby-Dehn}, Dehn filling along a slope longer than $6$ gives a hyperbolic 3-manifold, while Dehn filling along the meridian is $S^3$, which is not hyperbolic. Furthermore, any curve on the cusp torus $\d N$ has length at least $1$. In particular, $|\mu| \ge 1$.

If $S$ is an essential surface with connected boundary in a hyperbolic 3-manifold, then $\ell(\d S) \le -2\pi \chi(S)$; see Cooper--Lackenby~\cite[Theorem~5.1]{CooperLackenby} or Hass--Rubinstein--Wang \cite[Equation~(6)]{HassRubinsteinWang}. When $S$ is a Seifert surface for a knot $K$, then $\chi(S) = 1 - 2g(S)$. Hence, if $K$ is hyperbolic, then 
\begin{equation} \label{eqn:longitude-genus}
|\lambda| \le 4\pi g(K) - 2\pi, 
\end{equation}
where $g(K)$ is the Seifert genus of $K$.

For the maximal cusp neighbourhood $N$, we have
\[
\vol(\d N) = 2\vol(N) \le 2\vol(K),
\]
and $\vol(\d N) \le |\lambda| |\mu|$.
On the other hand, by a result of Lackenby and Purcell \cite{LackenbyPurcell}, there is a constant $C$ such that, for $K$ alternating,
\[
C \vol(K) \le \vol(\d N).
\]
Based on experimental data, one might ask if this also holds for random knots.

\begin{definition}\label{def:slope}
The \emph{natural slope} $\slope(K)$ of a hyperbolic knot $K$ is defined as follows. Let $\mu^\perp$ be a unit vector at the origin of $\mathbb{C}$ orthogonal to $\mu$. Then some multiple of $\mu^\perp$ is equal to $\lambda - s \mu$ for some $s \in \RR$. Then $\slope(K) := s$.
\end{definition}

\begin{lemma} \label{lem:slope-formula}
We have
\[
\slope(K) = \Re(\lambda/\mu) = \lambda \Re(\mu) / |\mu|^2.
\]
\end{lemma}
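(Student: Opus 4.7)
The plan is to translate the defining geometric condition into a complex-arithmetic equation and solve it. Recall that $\slope(K)$ is the unique real number $s$ such that $\lambda - s\mu$ is a real multiple of $\mu^\perp$, i.e.\ $\lambda - s\mu$ is perpendicular to $\mu$ in $\CC$ (viewed as $\RR^2$ with the standard inner product).

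First I would rewrite the perpendicularity condition using complex numbers: two vectors $z, w \in \CC$ are orthogonal if and only if $\Re(z \overline{w}) = 0$. Applying this with $z = \lambda - s\mu$ and $w = \mu$ yields
\[
\Re\bigl((\lambda - s\mu)\overline{\mu}\bigr) = 0.
\]
Since $s$ is real, this expands as $\Re(\lambda \overline{\mu}) - s|\mu|^2 = 0$, and because $\lambda \in \RR_{>0}$ we have $\Re(\lambda \overline{\mu}) = \lambda \Re(\mu)$. Hence
\[
s = \frac{\lambda \Re(\mu)}{|\mu|^2}.
\]

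Finally, I would reconcile the two claimed formulas by observing that
\[
\Re(\lambda/\mu) \;=\; \Re\!\left(\frac{\lambda \overline{\mu}}{|\mu|^2}\right) \;=\; \frac{\lambda \Re(\mu)}{|\mu|^2},
\]
again using $\lambda \in \RR$. Combining with the previous display gives $\slope(K) = \Re(\lambda/\mu) = \lambda \Re(\mu)/|\mu|^2$, as required.

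There is really no obstacle here; the only subtlety is a sign/convention check. One should confirm that the definition of $\slope(K)$ does not include a sign flip from the way $\mu^\perp$ is chosen: any two unit vectors orthogonal to $\mu$ differ by a sign, which changes the sign of the real coefficient $t$ with $\lambda - s\mu = t\mu^\perp$ but leaves the scalar $s$ unchanged. So the formula for $s$ is well defined independently of the orientation convention for $\mu^\perp$, and the proof is complete.
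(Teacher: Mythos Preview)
Your proof is correct and essentially the same as the paper's. The paper phrases the perpendicularity condition by dividing through by $\mu$ (so that $\mu^\perp$ becomes purely imaginary and hence $\lambda/\mu - s$ is purely imaginary), whereas you use the equivalent inner-product formulation $\Re\bigl((\lambda - s\mu)\overline{\mu}\bigr)=0$; both unwind to the same one-line computation.
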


\begin{proof}
Figure~\ref{Fig:NaturalSlopeCalc} shows a lift of the cusp torus to the complex plane $\mathbb{C}$. The point $\lambda - s \mu$ is shown (which is a multiple of $\mu^\perp)$. If we apply the transformation to $\mathbb{C}$ that is multiplication by $1/\mu$, then $\mu^\perp$ becomes purely imaginary. So $\lambda/\mu - s$ is purely imaginary. Hence, $s = \mathrm{Re}(\lambda/\mu)$. This is also equal to $\lambda \Re(\mu) / |\mu|^2$.
\end{proof}

\medskip
\begin{figure}[h]
\centering
\includegraphics[width=0.7\textwidth]{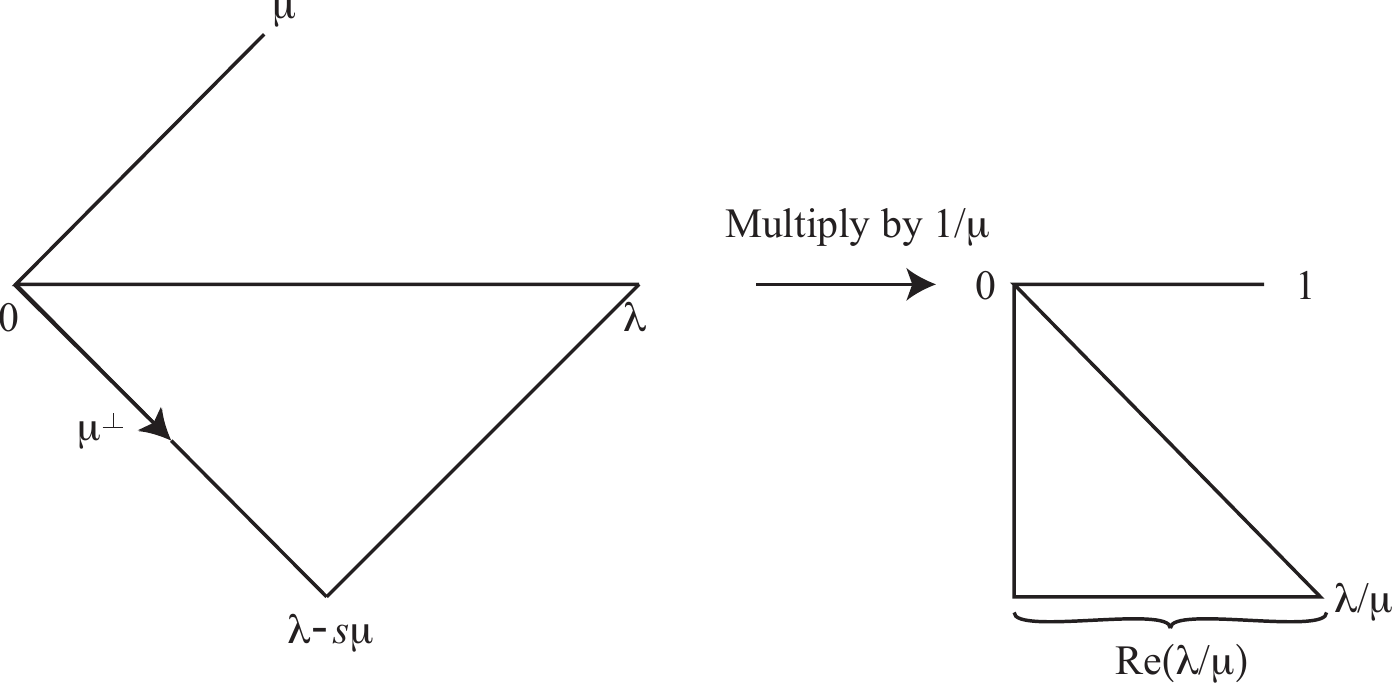}
\caption{The calculation of natural slope} \label{Fig:NaturalSlopeCalc}
\end{figure}

We are now ready to prove Lemma~\ref{lem:length-slope} from the introduction:

\begin{proof}[Proof of Lemma~\ref{lem:length-slope}]
	We have $\ell(q/p) = |p \lambda + q \mu|$. Since $\lambda \in \RR$, 
	\[
	\ell(q/p)^2 = p^2 \lambda^2 + 2pq \lambda \Re(\mu)+ q^2 |\mu|^2.
	\]
	On the other hand, by Lemma~\ref{lem:slope-formula}, we have $\slope(K) = \lambda \Re(\mu) /|\mu|^2$. Hence
	\[
	|p \slope(K) + q|^2 
	= p^2 \lambda^2 \frac{\Re(\mu)^2}{|\mu|^4} + 2pq \lambda \frac{\Re(\mu)}{|\mu|^2} + q^2 \le \ell(q/p)^2
	\]
	since $|\mu| \ge 1$.
\end{proof}

Slope gives a lower bound on the Seifert genus:

\begin{proposition}\label{prop:genus-bound}
	If $K$ is a hyperbolic knot in $S^3$, then 
	\[
	\frac{1}{4\pi} |\slope(K)| + \frac12 \le g(K).
	\]
\end{proposition}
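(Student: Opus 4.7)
The plan is to combine the explicit formula for $\slope(K)$ from \reflem{slope-formula} with the bound on longitudinal translation coming from equation~\refeqn{longitude-genus}. By \reflem{slope-formula}, we have
\[
\slope(K) = \frac{\lambda \Re(\mu)}{|\mu|^2},
\]
so I would first bound $|\slope(K)|$ by estimating the factor $|\Re(\mu)|/|\mu|^2$.

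The key observation is that $|\Re(\mu)| \leq |\mu|$, hence $|\Re(\mu)|/|\mu|^2 \leq 1/|\mu|$. Combined with the fact already recorded in the excerpt that every curve on the maximal cusp torus has length at least $1$, and in particular $|\mu| \geq 1$, this gives $|\Re(\mu)|/|\mu|^2 \leq 1$. Since $\lambda > 0$, this yields
\[
|\slope(K)| \leq \lambda.
\]

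Finally, I would invoke equation~\refeqn{longitude-genus}, which states that $\lambda \leq 4\pi g(K) - 2\pi$ for a hyperbolic knot $K$. Combining the two inequalities gives $|\slope(K)| \leq 4\pi g(K) - 2\pi$, which rearranges to the desired bound $\tfrac{1}{4\pi}|\slope(K)| + \tfrac{1}{2} \leq g(K)$. There is no real obstacle here: the proposition is essentially a bookkeeping consequence of two facts already established in the section, namely the formula for the natural slope and the universal lower bound $|\mu| \geq 1$ on the meridional translation together with the Cooper--Lackenby/Hass--Rubinstein--Wang bound on $|\lambda|$ in terms of Seifert genus.
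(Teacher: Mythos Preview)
Your proof is correct and follows essentially the same approach as the paper: use \reflem{slope-formula} together with $|\Re(\mu)|\le|\mu|$ and $|\mu|\ge 1$ to bound $|\slope(K)|\le|\lambda|$, then apply equation~\refeqn{longitude-genus}. The paper's argument is the same chain of inequalities, just written more tersely.
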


\begin{proof}
	By equation~\eqref{eqn:longitude-genus}, we have $|\lambda| \le 4\pi g(K) - 2\pi$. Furthermore, $|\mu| \ge 1$. Together with Lemma~\ref{lem:slope-formula}, we obtain that
	\[
	|\slope(K)| = |\lambda| \frac{|\Re(\mu)|}{|\mu|^2} \le \frac{|\lambda|}{|\mu|} \le 4\pi g(K) - 2\pi,
	\]
	and the result follows.
\end{proof}

\section{Proof of Theorem~\ref{thm:crosscap}} \label{sec:crosscap}

The key to proving Theorem~\ref{thm:crosscap} is the construction of a nice triangulation of a hyperbolic knot complement:

\begin{proposition}\label{prop:triangulation}
	There is a constant $c_1$ such that, for every hyperbolic knot $K$ in $S^3$ with embedded cusp neighbourhood $N$, there is a triangulation $\mathcal{T}$ of $M := S^3 \setminus (K \cup \mathrm{int}(N))$ with the following properties:
	\begin{enumerate}
		\item\label{it:vol-bound} The number $t$ of tetrahedra of $\cT$ is at most $c_1 \vol(K) \inj(K)^{-3}$.
		\item\label{it:nu} If $n$ is a closest even integer to $\slope(K)$, then $\nu := \lambda - n \mu$ (cf.~Definition~\ref{def:slope}) is a normal curve in $\d M$ that intersects each edge at most once.
	\end{enumerate}
\end{proposition}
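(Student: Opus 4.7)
The plan has two phases: first construct a triangulation $\tau$ of $\partial N$ adapted to the basis $(\mu,\nu)$ of $\Lambda$ and verify the intersection property directly, then extend $\tau$ to a triangulation of $M$ whose tetrahedron count has the required bound.

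\textbf{Step 1 (boundary triangulation).} Since $|\det(\mu,\nu)|=|\det(\mu,\lambda)|=1$, the pair $\{\mu,\nu\}$ is a basis of $\Lambda$. Because $n$ is the closest even integer to $\slope(K)=\Re(\lambda/\mu)$, we have $|\slope(K)-n|\le 1$, so a direct computation using $\nu=\lambda-n\mu$ shows that the component of $\nu$ parallel to $\mu$ has magnitude at most $|\mu|$ while the perpendicular component has magnitude $|\lambda|\Im(\mu)/|\mu|$; in particular $\nu$ is close to orthogonal to $\mu$ and the fundamental parallelogram $P$ with vertices $0,\mu,\nu,\mu+\nu$ is non-degenerate. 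Subdivide $P$ into a $k_1\times k_2$ grid of smaller parallelograms with $k_1,k_2$ chosen so each cell has diameter at most $\inj(K)/10$, and diagonalize each cell consistently. Using $\area(\partial N)=2\vol(N)\le 2\vol(K)$, this yields a triangulation $\tau$ of $\partial N$ with $O(\vol(K)/\inj(K)^2)$ triangles, each of diameter $O(\inj(K))$.

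\textbf{Step 2 (normality of $\nu$).} In the $(u,v)$-coordinates on $\partial N$ induced by the basis $(\mu,\nu)$, represent $\nu$ by the straight line $\{u=u_0\}$ for generic $u_0$. In each of the $k_2$ grid rows this line crosses exactly one edge parallel to $\mu/k_1$, exactly one diagonal, and none of the edges parallel to $\nu/k_2$. Hence $\nu$ meets each edge of $\tau$ at most once, and since within every triangle its intersection is a single straight segment joining two distinct sides, $\nu$ is normal.

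\textbf{Step 3 (extension and counting).} Take an embedded product collar $C:=\partial N\times[0,\inj(K)/2]\subset M$ and triangulate it by splitting each prism over a triangle of $\tau$ into three tetrahedra, producing $O(\vol(K)/\inj(K)^2)$ tetrahedra whose outer face is $\tau$ and whose inner face is an isomorphic copy $\tau'$. For the remainder $M\setminus\mathrm{int}(C)$, fix a maximal $(\inj(K)/40)$-separated set $\mathcal{P}$ containing the vertices of $\tau'$ and build a geometric triangulation on $\mathcal{P}$ via the geodesic Delaunay complex in the universal cover. The standard volume-packing bound gives $|\mathcal{P}|=O(\vol(K)/\inj(K)^3)$, so the total tetrahedron count satisfies $t\le c_1\,\vol(K)\,\inj(K)^{-3}$ for a universal constant $c_1$.

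\textbf{Main obstacle.} The principal subtlety is matching the triangulations of $C$ and $M\setminus\mathrm{int}(C)$ along their common boundary $\tau'$ without inflating the count. My plan is to force the vertices of $\tau'$ into $\mathcal{P}$ and then slightly perturb nearby net points so that the Delaunay complex restricts on $\tau'$ to a refinement of it, and then perform local diagonal flips in $M\setminus\mathrm{int}(C)$ to restore equality with $\tau'$. Tracking constants so that the $\inj(K)/40$-separation and the volume-packing bound both survive the perturbation, and verifying that the flips can be absorbed in the $O(\vol(K)/\inj(K)^2)$ boundary budget, is the most delicate part of the argument.
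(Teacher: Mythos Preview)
Your Steps 1--2 are a clean alternative to the paper's boundary argument: by building the grid in $(\mu,\nu)$-coordinates you get normality and the at-most-once intersection property for free, without the Cao--Meyerhoff area estimate or the explicit $h\ge 0.55$ computation the paper carries out. That part is fine.

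The genuine gap is Step 3, and you have correctly identified it. The difficulty is not just bookkeeping: forcing the boundary restriction of a 3-dimensional Delaunay complex to coincide with a \emph{prescribed} triangulation $\tau'$ is a hard constraint. Even after you include the vertices of $\tau'$ in $\mathcal{P}$, the Delaunay faces near the boundary are determined by \emph{interior} points of $\mathcal{P}$ as well, and there is no reason the induced boundary cell structure should be $\tau'$ (which is a grid, not a Delaunay triangulation in the intrinsic Euclidean metric on $\partial N$ unless the $(\mu,\nu)$-parallelogram happens to be rectangular). Your proposed fix of perturbation plus diagonal flips is problematic: in dimension 3 the elementary ``flip'' moves (2--3 and 1--4 bistellar moves) do not in general let you navigate between two triangulations of a fixed ball while keeping the boundary fixed, and there is no obvious bound on how many moves you would need.

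The paper avoids this obstacle entirely by doing a \emph{single} Voronoi construction on all of $M$. The trick is to use two separation scales: a tight one ($\eps/8$) for points placed on $\partial M$ first, and a looser one ($\eps/4$) for interior points added afterwards. One then checks, using the cusp geometry, that every point of $\partial M$ has its nearest $P$-points on $\partial M$; this forces the 3-dimensional Voronoi diagram to restrict to the 2-dimensional Euclidean Voronoi diagram on $\partial M$, so the boundary triangles are automatically Euclidean-straight. The normality/at-most-once property for $\nu$ is then obtained from a length comparison (edge length $\le 0.23$ versus perpendicular width $\ge 0.33$). This unified construction produces a triangulation of $M$ with compatible boundary in one stroke, so the matching problem you are wrestling with simply does not arise.
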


\begin{proof}
	We remark that the validity of the conclusion in the proposition does not depend on the choice of embedded cusp neighbourhood $N$. We will pick $N$ as follows. Let $N_{\mathrm{max}}$ be the maximal cusp neighbourhood. Retract this to form the embedded cusp neighbourhood $N$, so each point of $\partial N$ has distance $0.5$ from $\partial N_{\mathrm{max}}$. Note that the Euclidean metric on $\partial N$ is obtained from that of $\partial N_{\mathrm{max}}$ by scaling by the factor $e^{-0.5} = 1/\sqrt{e}$.
	
	Let $\eps := \inj(K)/2$. We use a variation of J{\o}rgensen's and Thurston's method \cite[\S5.11]{Thurston-notes} to build the triangulation $\cT$. (See also the work of Breslin \cite{Breslin} and Kobayashi-Rieck \cite{KobayashiRieck}.) 
	
	We pick a maximal collection of points in $\d M$ that are all at least $\eps/8$ from each other. We will extend this to a collection of points $P$ in $M$ without adding any new points in $\d M$. Our aim is to ensure that the Voronoi diagram for $P$ in $M$ restricts to the Voronoi diagram for $P \cap \d M$ in $\d M$, where the latter is given its Euclidean metric. Recall that the Voronoi diagram~\cite{Voronoi2}\cite{Voronoi1} corresponding to $P$ is a cell structure of $M$ where the interior of every 3-cell consists of the set of points in $M$ that are closer to a specific point of $P$ than any other point of $P$. Similarly, the Voronoi diagram for $P \cap \d M$ is a cell structure of $M$ where the interior of every 2-cell consists of the set of points in $\d M$ that are closer (in the Euclidean metric) to a specific point of $P \cap \d M$ than any other point of $P \cap \d M$.
	
	The Voronoi diagram for $M$ can be constructed as follows. The universal cover $\mathbb{H}^3 \rightarrow S^3 \setminus K$ restricts to the universal cover $\tilde M \rightarrow M$. This set $\tilde M$ is obtained from $\mathbb{H}^3$ by removing the interior of the inverse image of $N$. We may arrange that one component of this inverse image is a horoball $N_\infty = \{ (x,y,z) : z \geq k \}$ in the upper half-space model for $\mathbb{H}^3$, for some $k > 0$. Let $\tilde P$ denote the inverse image of $P$ in $\tilde M$. Each cell of the Voronoi diagram for $M$ is the image of a cell for the Voronoi diagram for $\tilde P$ in $\tilde M$. Each 2-cell that does not lie in $\d \tilde M$ is equidistant from two points of $\tilde P$. Hence, it is totally geodesic. Our aim is to ensure that each such 2-cell that intersects the horosphere $\d N_\infty$ is equidistant between two points of $\tilde P \cap \d N_\infty$. This will imply that the 2-cell intersects $\d N_\infty$ in a Euclidean geodesic arc. The union of these arcs forms the 1-skeleton of the Voronoi diagram for $\tilde P \cap \d N_\infty$ in $\d N_\infty$. Thus, we can deduce that the Voronoi diagram for $P$ in $M$ restricts to the Voronoi diagram for $P \cap \d M$ in $\d M$.
	
	We now describe how the set $P$ is chosen. We have already picked a maximal collection of points in $\d M$ that are all at least $\eps/8$ from each other. This set will be $P \cap \d M$. We then add points to this set that lie in the interior of $M$, but subject to the condition that each of these points in the interior of $M$ has distance at least $\eps/4$ from the other points in the set. We stop when it is no longer possible to add any further points with this property. Let $P$ be the resulting set of points.

	By our choice of $P$, each point in $\d M$ has distance less than $\eps/8$ from some point of $P \cap \d M$. It also has distance at least $\eps/8$ from each point of $P \cap \mathrm{int}(M)$. Thus, for each point of $\partial M$, each of its closest points in $P$ also lies in $\d M$. 
	
	Now consider a 2-cell of the Voronoi diagram for $\tilde M$ that intersects $\d N_\infty$ but does not lie in $\d N_\infty$. This is equidistant between two points $p_1$ and $p_2$ of $\tilde P$. The intersection between this 2-cell and $\d N_\infty$ is an arc. Let $x$ be any point in the interior of this arc. Then $x$ is equidistant between $p_1$ and $p_2$, and these are the closest two points of $\tilde P$ to $x$. As argued above, any point of $\tilde P$ that is closest to $x$ must lie in $\d \tilde M$. We will show that, in fact, $p_1$ and $p_2$ lie in $\d N_\infty$. Suppose not. Then one of these points lies in $\d \tilde M - \d N_\infty$. The shortest arc from $x$ to $\d \tilde M - \d N_\infty$ must run through the inverse image of $\d N_{\mathrm{max}}$. One component of this inverse image is a horosphere about the point at infinity, with distance $0.5$ from $\d N_\infty$. Hence, the length of this arc is at least $0.5$. On the other hand, each point in $\d \tilde M$ has distance less than $\eps/8$ from some point of $P \cap \d \tilde M$. We will show below that $\eps/8 < 0.12 < 0.5$, and hence this is a contradiction.

	Thus, we have indeed guaranteed that the restriction to $\d M$ of the Voronoi diagram for $P$ in $M$ is the Voronoi diagram for $P \cap \d M$ in $\d M$, as claimed. We now subdivide each 2-cell of the Voronoi diagram for $M$ into triangles without introducing any new vertices, and subdivide each 3-cell into tetrahedra by coning off from the point of $P$ lying in it, obtaining the triangulation $\cT$ of $M$. Since the restriction of the Voronoi diagram to $\d M$ agrees with that arising from its Euclidean metric, this implies that each triangle of $\cT$ in $\partial M$ is straight.

	Since the open balls of radius $\eps/16$ about the points of $P$ are pairwise disjoint,
	\[
	|P| \vol(B(\eps/16)) \le \vol(S^3 \setminus K),
	\] 
	where $B(\eps/16)$ is a ball in $\HH^3$ of radius $\eps/16$.
	
	We claim that the number $t_p$ of tetrahedra of $\cT$ incident to a point $p \in P$ is at most a universal constant $k$. Indeed, when $p$ lies in the interior of $M$, $t_p$ is exactly the number of triangles in the boundary 2-sphere $S$ of the 3-cell of the Voronoi diagram containing $p$. When $p$ lies in the boundary of $M$, $t_p$ is the number of triangles in this sphere that are not incident to $p$. When a vertex of one of these triangles lies in the interior of $M$, it is equidistant from at least four points of $P$, one of which is $p$. When a vertex of the triangles lies on the boundary of $M$, it is equidistant from at least three points of $P$, one of which is $p$. So, a vertex in $S$ is specified by choosing two or three other points of $P$, each of which is at most $\eps/2$ from $p$. The ball $B(p, \eps/2)$ is embedded in $S^3 \setminus K$, since $\eps/2 = \inj(K)/4$, and hence lifts to a ball $B$ in $\HH^3$. The balls of radius $\eps/16$ about the inverse image of $P$ in $B$ are disjoint, and lie within $B(9\eps/16)$. So, the number of points of $P$ at most $\eps/2$ from $p$ is bounded above by 
	\[
	k_0 := \left\lfloor \frac{\vol(B(9\eps/16))}{\vol(B(\eps/16))} \right\rfloor. 
	\]
	It follows that $t_p \le k := \binom{k_0}{3}$. Then the total number of tetrahedra
	\[
	t \le k |P| \le k \vol(K) / \vol(B(\eps/16)) \le c_1 \vol(K) \inj(K)^{-3}
	\]
	for a universal constant $c_1$.
	
	We may pick the Euclidean geodesic representative for the slope $\nu$ so that it misses the vertices of $\cT$. Hence $\nu$ is a normal curve, because it is a Euclidean geodesic and each triangle of $\mathcal{T}$ in $\d M$ is straight. We now show $\nu$ does not intersect any triangle in $\d M$ more than once. Let $D$ be a fundamental domain in $\partial N_{\mathrm{max}}$ with sides $\mu$ and $\nu$. (See Figure \ref{Fig:NaturalSlopeFigure}.) We will show that the perpendicular distance $h$ between the sides of $D$ that are parallel to $\nu$ is at least 0.55. Hence, the perpendicular distance between sides of the corresponding fundamental domain in $\d N_\infty$ is at least $0.55/\sqrt{e} > 0.33$. On the other hand, we will show that the length of each edge of $\cT$ in $\d M$ is at most $0.23$. This will imply that in the triangulation of $\partial M$, no triangle can run in $D$ between these opposite sides, and hence that $\cT$ satisfies property~\eqref{it:nu}. This will complete the proof.
	
	According to a theorem of Cao and Meyerhoff \cite{Cao-Meyerhoff}, the area $A$ of the boundary of the maximal cusp is at least $3.35$. Let $\theta$ be the angle of two of the four corners of $D$ satisfying $0 < \theta \leq \pi/2$. Say that this angle is at the vertex $v_1$ of $D$, and label the remaining vertices $v_2$, $v_3$, $v_4$, so that the line joining $v_1$ to $v_2$ has slope $\mu$. 
	
	Let $b$ be the perpendicular projection of $v_4$ onto the line joining $v_1$ and $v_2$. We claim that $b$ lies between $v_1$ and $v_2$, or possibly equals one of these vertices. Place $v_1$ at the origin in the complex plane. Then $v_2 = \pm \mu$ and $v_4 = \lambda - n \mu$. Now, by the definition of $s = \slope(K)$, the perpendicular projection of $\lambda - s \mu$ onto the line through $v_1$ and $v_2$ is $v_1$. Hence, the perpendicular projection $b$ of $\lambda - n \mu$ onto this line has distance $|n-s| \, |\mu|$ from $v_1$. But $n$ is a closest even integer to $s$, and so $|n-s| \leq 1$. Therefore, $b$ lies between $v_1$ and $v_2$, or is equal to one of these points, as claimed.
	
\begin{figure}[h]
\centering
\includegraphics[width=0.5\textwidth]{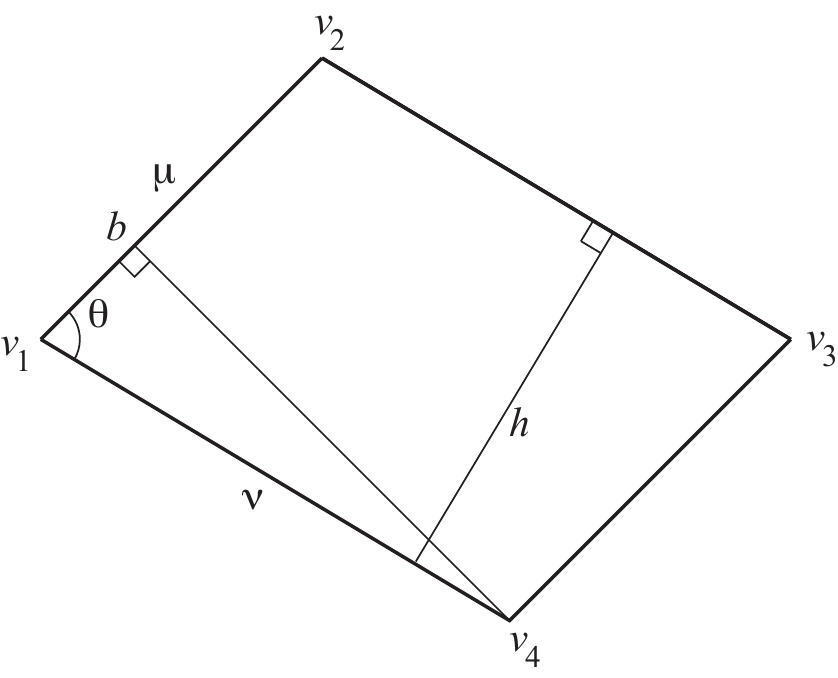}
\caption{A fundamental domain $D$ in $\partial N_{\mathrm{max}}$ with sides $\mu$ and $\nu$}
\label{Fig:NaturalSlopeFigure}
\end{figure}

	Hence,
	\[
	\tan \theta \geq A / |\mu|^2
	\]
	and so
	\[
	\sec^2 \theta = 1 + \tan^2 \theta \geq \frac{A^2 + |\mu|^4}{|\mu|^4}.
	\]
	Therefore, 
	\[
	\sin^2 \theta = 1 - \cos^2 \theta \geq 1 - \frac{|\mu|^4}{A^2 + |\mu|^4} = \frac{A^2}{A^2 + |\mu|^4}.
	\]
	So, the distance $h$ satisfies
	\[
	h = |\mu| \sin \theta \geq \frac{|\mu| A}{\sqrt{A^2 + |\mu|^4}}.
	\]
	The square of the reciprocal of this expression is
	\[
	\frac{A^2 + |\mu|^4}{|\mu|^2 A^2} = \frac{1}{|\mu|^2} + \frac{|\mu|^2}{A^2}. 
	\]
	It is easy to check that this is a convex function of $|\mu|$ and hence its maximal value over the interval $1 \leq |\mu| \leq 6$ occurs when $|\mu| = 1$ or $6$. It also is maximised by taking $A$ as small as possible, in other words $A = 3.35$. We deduce that $h$ is at least
	\[
	\frac{6 \times (3.35)}{\sqrt{(3.35)^2 + (36)^2}} \geq 0.55.
	\]
Hence, the perpendicular distance between sides of the corresponding fundamental domain in $\d N_\infty$ is at least $0.55/\sqrt{e} > 0.33$.
	
	We now compare this to the maximal length of an edge of $\cT$ in $\d M$. Each triangle of $\cT$ in $\d M$ lies within a disc centred at a point of $P \cap \d M$ with radius at most $\eps/8$. Hence each triangle has side length at most $\eps/4 = \inj(K)/8$. Now the length $L$ of the shortest slope $s$ on $\d N_{\mathrm{max}}$ is at most $|\mu| \leq 6$. This gives an upper bound on $\inj(K)$, as follows. By applying an isometry to hyperbolic space, we may arrange that a component of the inverse image of $N_{\mathrm{max}}$ in upper half space is $\{ (x,y,z): z \geq 1 \}$. We may also arrange that a covering transformation corresponding to $s$ is $(x,y,z) \mapsto (x + L, y, z)$. It therefore sends $(0,0,1)$ to $(L,0,1)$. The hyperbolic distance between these points is at most
	\[
	2 \ln \left ( \frac{6 + \sqrt{40}}{2} \right ) \leq 3.64.
	\]
	Hence, $\inj(K)$ is at most $1.82$ and $\eps/4$ is at most $0.23$. This completes the proof.
\end{proof}	

\begin{proof}[Proof of Theorem~\ref{thm:crosscap}]	
	Let the triangulation $\cT$ and the curve $\nu$ be as in Propositions~\ref{prop:triangulation}. 
	Since $\nu = \lambda - n \mu$ for $n$ even, $[\nu] = [\lambda] \in H_1(\d M; \Z_2)$, so $\nu$ bounds an unoriented surface $S$ in $M$. If we make $S$ transverse to the 1-skeleton of $\cT$, it defines a simplicial 1-cocycle $c \in C^1(M; \Z_2)$ via $c(e) = |S \cap e| \mod 2$ for each edge $e$ of $\cT$. 
	If we connect the midpoints of the edges $e$ of $T$ such that $c(e) = 1$, we obtain a surface $F$ that intersects each tetrahedron $T$ of the triangulation $\cT$ in at most one triangle or square. In particular, $F$ is a normal surface. Furthermore, $\d F = \nu$ as $\nu$ is a normal curve that intersects each triangle in $\d M$ at most once. Discard any closed components of $F$.
	
	Let $t$ be the number of tetrahedra of $\cT$. Furthermore, write $v$, $e$, and $f$ for the number of vertices, edges, and faces of $F$, respectively. By the above, $f \le t$. Then $\chi(F) = v - e + f$, and since $F$ is not a disk, $|\chi(F)| = e - f - v$. Since every face of $F$ is a triangle or a quadrilateral, 
	\[
	e \le \frac{4f + e_\d}{2} \le t + f + \frac{e_\d}{2},
	\]
	where $e_\d$ is the number of edges of $F$ in $\d M$. As $v \ge e_\d$, we obtain that 
	\[
	|\chi(F)| \le t \le c_1 \vol(K) \inj(K)^{-3},
	\]
	where the second inequality is property~\eqref{it:vol-bound} of $\cT$ in Proposition~\ref{prop:triangulation}.
\end{proof}

\section{The knot signature}\label{sec:signature}

Another fundamental knot invariant is the signature $\sigma(K)$. Given a Seifert surface $S$ for $K$; i.e., a compact, oriented, and connected surface with boundary $K$, one can define the Seifert form
\[
Q_S \colon H_1(S) \times H_1(S) \to \ZZ
\]
as follows: Given $a$, $b \in H_1(S)$, we write $b^+$ for the positive push-off of $b$ into $S^3 \setminus S$. Then $Q_S(a, b) = \lk(a, b^+)$.
If $V$ is a matrix of $Q_S$, then $\sigma(K)$ is the signature of $V + V^T$.
The signature is a 4-dimensional invariant, in the sense that it gives a lower bound on the topological 4-ball genus $g_4^{\mathrm{top}}(K)$, which is the minimal genus of a compact, oriented, locally-flat, connected surface bounded by $K$ in the 4-ball $B^4$.

One can also compute the signature of a knot from unoriented surfaces using the work of Gordon and Litherland~\cite{Gordon-Litherland}. Let $F$ be an unoriented surface bounding a knot $K$ in $S^3$. Let $\{b_1, \dots, b_n\}$ be a basis of $H_1(F)$, and let $b_i'$ be the double push-off of $b_i$ into $S^3 \setminus F$. Then the \emph{Goeritz matrix} $G_F$ is an $n \times n$ symmetric matrix with $(i,j)$-th entry $\lk(b_i, b_j')$ for $i$, $j \in \{1, \dots, n\}$. Furthermore, the \emph{normal Euler number} $e(F)$ of $F$ is defined to be $-\lk(K, K')$, where $K'$ is the framing of $K$ given by $F$. Gordon and Litherland proved the following:

\begin{theorem}\label{thm:Gordon-Litherland}
	Let $F$ be an unoriented surface bounding the knot $K$ in $S^3$. Then
	\[
	\sigma(K) = \sigma(G_F) + \frac{e(F)}{2},
	\]
	where $\sigma(G_F)$ is the signature of the Goeritz matrix.
\end{theorem}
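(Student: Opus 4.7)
The plan is to prove Gordon--Litherland via the double branched cover of the 4-ball. First, I would push the interior of $F$ into the interior of $B^4$, producing a properly embedded surface $F' \subset B^4$ with $\d F' = K \subset S^3 = \d B^4$, and then form the double cover $W_F \to B^4$ branched over $F'$. A standard check shows that $\d W_F = \Sigma_2(K)$, the double cover of $S^3$ branched over $K$, independently of the choice of spanning surface.

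The next step is to compute $\sigma(W_F)$ from a handle decomposition. The preimage of $F'$ inside $W_F$ is homotopy equivalent to $F$, and $W_F$ deformation retracts onto a regular neighbourhood of this preimage. Using the transfer map for the double cover, one identifies $H_2(W_F;\QQ)$ with $H_1(F;\QQ)$. Under this identification, the intersection pairing on $H_2(W_F)$ pulls back to a symmetric form on $H_1(F)$ whose matrix in a basis of loops is precisely the Goeritz matrix $G_F$; intuitively, two loops on $F$ lift to closed surfaces in $W_F$ whose algebraic intersection number equals the linking number of one loop with a double push-off of the other. Hence $\sigma(W_F) = \sigma(G_F)$.

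The remaining step is to relate $\sigma(W_F)$ to $\sigma(K)$. When $F$ is an oriented Seifert surface (so $e(F) = 0$), it is well known that $\sigma(W_F) = \sigma(K)$. For general unoriented $F$, the branched cover construction produces a fixed surface with nontrivial normal bundle in $W_F$, and the G-signature theorem applied to the $\ZZ/2$ deck transformation, whose fixed set is the preimage of $F'$, yields a defect between $\sigma(W_F)$ and $\sigma(K)$ equal to $e(F)/2$, with normalisation matching $e(F) = -\lk(K, K')$ from the statement. Combining the two computations gives $\sigma(K) = \sigma(G_F) + e(F)/2$.

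The main obstacle is pinning down the signs and normalisation in the last step: one must verify that the correction from the G-signature theorem is exactly $e(F)/2$ with the stated sign convention. A safer alternative that sidesteps the G-signature machinery is to argue by induction on the complexity of $F$. Any unoriented spanning surface can be transformed into an oriented Seifert surface by a sequence of local moves (band moves and crosscap additions/removals); each such move changes $G_F$ in a controlled way (adding or removing a $\pm 1$ block) while simultaneously changing $e(F)$ by a compensating amount, so that $\sigma(G_F) + e(F)/2$ is preserved throughout. The base case then reduces to the classical Seifert-surface formula, avoiding the more delicate branched-cover normalisation argument.
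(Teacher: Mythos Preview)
The paper does not prove this theorem at all: it is stated as a result of Gordon and Litherland with a citation to \cite{Gordon-Litherland}, and is used as a black box in the proofs of Theorems~\ref{thm:main}, \ref{Thm:SignatureSlopeTwisting}, and \ref{Thm:SignatureCorrection}. So there is no ``paper's own proof'' to compare against.

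Your sketch is essentially the original Gordon--Litherland argument. A few points deserve care if you want to turn it into a proof. First, the identification of $H_2(W_F;\QQ)$ with $H_1(F;\QQ)$ and the claim that the intersection form becomes $G_F$ is the real content; you have asserted it rather than proved it, and the ``intuitively'' sentence hides the computation. Second, your appeal to the G-signature theorem is heavier machinery than Gordon and Litherland actually use: they compare $W_F$ for two different spanning surfaces by gluing along $\Sigma_2(K)$ and applying Novikov additivity together with the ordinary signature of a closed branched cover, which is more elementary. Your stated worry about signs and normalisation in the $e(F)/2$ term is exactly the delicate point, and the G-signature route does not make it easier. Your alternative inductive approach via local moves is sound in principle, but you would need to specify the moves precisely (e.g.\ the $S^\ast$-equivalence moves relating unoriented spanning surfaces) and check the invariance of $\sigma(G_F)+e(F)/2$ under each; this is closer in spirit to how one often verifies the formula in practice.
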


We are now ready to show how Theorem~\ref{thm:main} follows from Theorem~\ref{thm:crosscap}.

\begin{proof}[Proof of Theorem~\ref{thm:main}] 
Let $F$ be the surface provided by Theorem~\ref{thm:crosscap}, with boundary slope $\nu = \lambda - n \mu$, where $n$ is a closest even integer to $\slope(K)$. Let $G_F$ be the Goeritz matrix of $F$. Since 
\[
|\chi(F)| \leq c_1 \vol(K) \inj(K)^{-3},
\]
 we deduce that 
 \[
 b_1(F) \leq c_1 \vol(K) \inj(K)^{-3} + 1, 
 \]
and so $|\sigma(G_F)| \leq c_1 \vol(K) \inj(K)^{-3} + 1$. Therefore,
\begin{align*}
	|2\sigma(K) - \mathrm{slope}(K)| 
	&\leq |2\sigma(K) -  n| + 1 \\
	&= |2\sigma(K) + \lk(K, \nu)| + 1 \\
	&= |2 \sigma(G_F)| + 1 \\
	&\leq 2 c_1 \vol(K) \inj(K)^{-3} + 3 \\
	&\leq c_2  \vol(K) \inj(K)^{-3},
\end{align*}
for the absolute constant 
\[
c_2 := 2 c_1 + \frac{3 \cdot (1.82)^3}{2.0298} < 2 c_1 + 8.92.  
\]
Indeed, for any hyperbolic knot $K$, we have $\inj(K) \le 1.82$ as shown in the proof of Proposition \ref{prop:triangulation}, and $\vol(K) > 2.0298$, with the figure eight knot having the smallest volume, by Cao and Meyerhoff~\cite{Cao-Meyerhoff}.
\end{proof}

In the following definition, we introduce the signature correction $\kappa(p,q)$ for integers $p$ and $q$, which is related to the signature of the $(p,q)$-torus knot. The correction terms in Theorem \ref{Thm:SignatureCorrection} are defined in terms of $\kappa(p,q)$.

\begin{definition}\label{def:kappa}
	For any pair of positive integers $(p,q)$, we define the \emph{signature correction} $\kappa(p,q)$ recursively as follows.
	\begin{enumerate}
		\item If $p > 2q$ and $q$ is odd, then $\kappa(p,q) = \kappa(p-2q,q) - 1$.
		\item If $p > 2q$ and $q$ is even, then $\kappa(p,q) = \kappa(p-2q,q)$.
		\item If $p = 2q$, then $\kappa(p,q) = -1$.
		\item If $q \leq p < 2q$ and $q$ is odd, then $\kappa(p,q) = -\kappa(q,2q-p) - 1$.
		\item If $q \leq p < 2q$ and $q$ is even, then $\kappa(p,q) = -\kappa(q,2q-p) - 2$.
		\item If $p < q$, then $\kappa(p,q) = \kappa(q,p)$.
	\end{enumerate}
	We extend $\kappa$ to non-zero integers $p$, $q$ by defining $\kappa(-p,q) = \kappa(p,-q) = -\kappa(p,q)$. When one of $p$ or $q$ is zero, then $\kappa(p,q) = 0$.
\end{definition}

It is reasonably clear that this gives a well-defined value of $\kappa(p,q)$. This is because it defines $\kappa(p,q)$ uniquely when $p=q$, and when $p \not= q$, it defines $\kappa(p,q)$ in terms of some $\kappa(p',q')$ where either $q' < q$, or $q' = q$ and $p' < p$. However, the rationale for the definition comes from the following fact due to Gordon, Litherland, and Murasugi~\cite{GLM}:

\begin{theorem} 
	\label{Thm:GordonLitherlandMurasugi}
	The signature of the $(p,q)$-torus link $T(p,q)$ satisfies 
	\[
	\sigma(T(p,q))= -pq/2 - \kappa(p,q).
	\]
\end{theorem}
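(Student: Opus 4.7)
The plan is to verify that the function $\tilde\kappa(p,q) := -\sigma(T(p,q)) - pq/2$ satisfies the defining recursion of $\kappa$ in Definition~\ref{def:kappa} and the sign-change extension to negative integers. Because that recursion determines its function uniquely—clauses (1), (2), (4), (5) strictly decrease $(q,p)$ lexicographically except when $p=q$, in which case clauses (4)/(5) reduce to self-referential equations with unique solutions ($-1/2$ for $q$ odd, $-1$ for $q$ even)—the equality $\tilde\kappa = \kappa$ will follow from a finite check.

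The symmetry, mirror, and degenerate cases are immediate. The isotopy $T(p,q) \cong T(q,p)$ gives $\tilde\kappa(p,q) = \tilde\kappa(q,p)$, matching clause (6). Mirroring sends $T(p,q)$ to $T(-p,q)$ and negates the signature; since $(-p)q = -pq$, this yields the sign-change extension. If $p$ or $q$ is zero, $T(p,q)$ is an unlink with $\sigma = 0$ and $pq = 0$, so $\tilde\kappa = 0$.

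The substantive step is clauses (1)--(5), for which I would invoke the Euclidean-algorithm-style recursion for $\sigma(T(p,q))$ established in Gordon--Litherland--Murasugi~\cite{GLM}. In the regime $p > 2q$, they express $\sigma(T(p,q))$ in terms of $\sigma(T(p-2q,q))$ via a parity-dependent additive correction obtained by handling $2q$ crossings in the standard torus-braid presentation; in the regime $q \le p < 2q$, a symmetric reduction relates $T(p,q)$ to $T(q, 2q-p)$ combined with mirroring. Substituting $\sigma = -\tilde\kappa - pq/2$ and using the identities
\[
\tfrac{1}{2}pq - \tfrac{1}{2}(p-2q)q = q^{2}, \qquad \tfrac{1}{2}pq + \tfrac{1}{2}q(2q-p) = q^{2},
\]
converts the GLM recursion into exactly the integer constants and signs prescribed by clauses (1), (2), (4), (5), where the parity of $q^2$ matches the parity of $q$ and so absorbs neatly into the odd/even case split. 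Clause (3), the case $p = 2q$, is verified directly: under coprimality this forces $q=1$, so $T(2,1)$ is the unknot, $\sigma = 0$, and $\tilde\kappa(2,1) = -1$; the extension to non-coprime $T(2q,q)$ follows from the same reduction applied one step further.

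The main obstacle is bookkeeping rather than any conceptual difficulty: carefully matching the four sub-cases (two regimes crossed with two parities of $q$) to the corresponding clauses of Definition~\ref{def:kappa}, and checking that the quadratic shift $q^2$ combines with GLM's parity-dependent corrections to yield exactly $-1$, $0$, $-1$, or $-2$ as required. A secondary concern is reconciling the mirror and orientation conventions used in the GLM paper with those implicit in Definition~\ref{def:kappa}, which may necessitate an occasional sign adjustment but introduces no new content.
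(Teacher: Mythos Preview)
The paper does not prove this theorem; it is stated as a known fact attributed to Gordon, Litherland, and Murasugi~\cite{GLM}, with Definition~\ref{def:kappa} having been reverse-engineered precisely so that the identity holds. Your proposal---showing that $\tilde\kappa(p,q) := -\sigma(T(p,q)) - pq/2$ obeys the same recursion as $\kappa$ by invoking the GLM reduction formulas---is exactly the natural way to make this explicit, and is correct in outline. In effect you are supplying the bookkeeping that the paper suppresses by citing~\cite{GLM} directly, so there is nothing to compare: your argument is more detailed than what the paper offers, not different from it.
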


The signature correction $\kappa(p, q)$ arises naturally as the signature of the Goeritz form of a surface bounding the $(p,q)$-torus knot, as follows.

\begin{lemma}
	\label{Lem:TorusKnotGoeritz}
	Let $V$ be the standard solid torus in $S^3$, and let $T(p,q)$ be the curve on $\partial V$ that is the $(p,q)$-torus knot, where $p$ is even and $q$ is odd. Thus, $p$ is the winding number of $T(p,q)$ in $V$. Then there is a compact unoriented surface $F$ in $V$ with boundary $T(p,q)$, and $\sigma(G_F) = -\kappa(p,q)$ for any such $F$.
\end{lemma}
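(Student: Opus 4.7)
First I would establish the existence of such a surface by a simple homological argument. The class $[T(p,q)] \in H_1(V) \cong \mathbb{Z}$ equals $p$ times the generator, so since $p$ is even it vanishes in $H_1(V;\mathbb{Z}/2)$. Hence $T(p,q)$ bounds an unoriented surface $F$ in $V$.

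For the signature computation, my plan is to combine the Gordon--Litherland formula (Theorem~\ref{thm:Gordon-Litherland})
\[
\sigma(T(p,q)) = \sigma(G_F) + e(F)/2
\]
with Theorem~\ref{Thm:GordonLitherlandMurasugi}, which asserts $\sigma(T(p,q)) = -pq/2 - \kappa(p,q)$. It then suffices to show that $e(F) = -pq$ for any such $F$. To this end, I would isotope $F$ so that it meets $\partial V$ transversely along $\partial F = T(p,q)$. Then a collar of $\partial F$ in $F$ is an annulus lying just inside $V$, and the normal line bundle to $F$ in $S^3$ admits a trivialization on this collar whose section is tangent to $\partial V$ along $\partial F$. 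The pushoff $K'$ of $K = T(p,q)$ produced by this section is therefore a parallel copy of $T(p,q)$ sitting on $\partial V$, and any two parallel $(p,q)$-cables on the standard torus have linking number $pq$ in $S^3$. This gives $e(F) = -\lk(K,K') = -pq$, and substitution yields $\sigma(G_F) = -\kappa(p,q)$.

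The main subtlety I expect is verifying the framing identification when $F$ is \emph{non-orientable}, since the Gordon--Litherland normal Euler number $e(F)$ is the relative Euler class of the normal $O(1)$-bundle of $F$, which is genuinely non-trivial on the interior of a non-orientable $F$. The key observation, however, is that $e(F)$ only depends on the framing over $\partial F$ together with its extension across $F$, and a collar of $\partial F$ in $F$ is always orientable (an annular neighbourhood of a circle in any surface is two-sided), so the pushoff construction above is well-defined and produces the value $\lk(K,K') = pq$ regardless of the global orientability of $F$.
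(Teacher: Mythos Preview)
Your proposal follows exactly the same overall structure as the paper's proof: existence of $F$ from vanishing of $[T(p,q)]$ in $H_1(V;\mathbb Z/2)$, then the Gordon--Litherland formula combined with the Gordon--Litherland--Murasugi expression $\sigma(T(p,q))=-pq/2-\kappa(p,q)$, reducing everything to the single claim $e(F)=-pq$.

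The only substantive difference is in how you justify $e(F)=-pq$, and here your argument has a small gap. Transversality of $F$ and $\partial V$ along $K$ does \emph{not} imply that the normal line to $F$ is tangent to $\partial V$ along $K$; that requires $F$ to meet $\partial V$ orthogonally. More importantly, the Gordon--Litherland push-off $K'$ is the push of $K$ \emph{into} $F$ (along the collar annulus), not the push \emph{off} $F$ along a normal section. These two framings do coincide, but that needs a word of justification: the into-$F$ direction and the along-$\partial V$ direction are, by transversality, pointwise linearly independent sections of the rank-2 normal bundle $\nu_K$ (since $T_xF\cap T_x\partial V = T_xK$), and two such sections over $S^1$ are homotopic, hence give the same framing. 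With that said, your conclusion $\lk(K,K')=pq$ is correct.

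The paper sidesteps this framing discussion entirely with a direct homological computation: the push-off $K'\subset\operatorname{int}(V)$ is homologous in $V$ to $p$ times the core $\gamma'$, while $K=\partial F$ is homologous in $\operatorname{cl}(S^3\setminus V)$ to $q$ times its core $\gamma$, and $\lk(\gamma,\gamma')=1$, giving $\lk(K,K')=pq$. This is shorter and avoids the orthogonality/framing subtleties.
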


\begin{proof}
	Since $p$ is even, $T(p,q)$ is trivial in $H_1(V; \ZZ_2)$. It therefore bounds an unoriented surface $F$ in $V$. Applying Gordon and Litherland's signature formula (Theorem~\ref{thm:Gordon-Litherland}) to $F$, we deduce that 
	\[
	\sigma(T(p,q)) = \sigma(G_F) + \frac{e(F)}{2}. 
	\]
	The push-off $K'$ of $\partial F$ into $F$ has linking number $pq$ with $\d F$. To see this, observe that $K'$ is homologous in $V$ to $p$ times a core curve $\gamma'$ of $V$. 
	Similarly, $\partial F$ is homologous in the solid torus $\mathrm{cl}(S^3 \setminus V)$ to $q$ times its core curve $\gamma$, which is a meridian of $\gamma'$. Thus
	\[
	\lk(\d F, K') = pq \, \lk(\gamma, \gamma') = pq,
	\]
    hence $e(F) = -pq$. So 
    \[
    \sigma(G_F) = \sigma(T(p,q)) + pq/2 = -\kappa(p,q), 
    \]
    where the final equality is Theorem \ref{Thm:GordonLitherlandMurasugi}.
\end{proof}

\begin{lemma}
	\label{Lem:AddRowColumn}
	Let $A$ be a non-singular square matrix with real entries. Let $A_+$ be a non-singular matrix obtained from $A$ by adding a final row and final column. Then $\sigma(A_+)$ is either $\sigma(A)-1$ or $\sigma(A)+1$.
\end{lemma}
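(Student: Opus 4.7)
The plan is to apply the Cauchy interlacing theorem. Since the notion of signature is only meaningful for symmetric matrices, I would assume implicitly that $A$ and $A_+$ are both real symmetric (which is the case in the paper's applications, where these matrices arise as Goeritz forms or principal submatrices thereof); in particular, $A$ is realised as the principal submatrix of $A_+$ obtained by deleting the last row and column.

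Let $m$ be the size of $A$, and let $\lambda_1 \le \cdots \le \lambda_m$ and $\mu_1 \le \cdots \le \mu_{m+1}$ denote the eigenvalues of $A$ and $A_+$ respectively. The Cauchy interlacing theorem then gives
\[
\mu_i \le \lambda_i \le \mu_{i+1} \qquad \text{for } i = 1, \ldots, m.
\]
Next I would count signs. Since $A$ and $A_+$ are non-singular, none of these eigenvalues is zero. Let $p$ and $q$ be the numbers of positive and negative eigenvalues of $A$, so $p + q = m$ and $\sigma(A) = p - q$, and let $p_+$ and $q_+$ be the analogous counts for $A_+$, so $p_+ + q_+ = m+1$. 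The interlacing inequalities immediately force $p_+ \ge p$ (for $i \ge m - p + 2$, we have $\mu_i \ge \lambda_{i-1} \ge \lambda_{m-p+1} > 0$, yielding at least $p$ positive $\mu_i$) and, symmetrically, $q_+ \ge q$.

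Finally, since $p_+ + q_+$ exceeds $p + q$ by exactly one, precisely one of these two inequalities is strict while the other is an equality. In the first case $\sigma(A_+) = (p+1) - q = \sigma(A) + 1$, and in the second $\sigma(A_+) = p - (q+1) = \sigma(A) - 1$, which is the desired conclusion. I expect no serious obstacle: Cauchy interlacing is precisely the classical tool for comparing the spectra of a symmetric matrix and its principal submatrices, and the present statement is essentially its simplest non-trivial consequence for signatures.
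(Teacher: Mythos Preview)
Your proof is correct and follows essentially the same approach as the paper: both invoke Cauchy's interlacing theorem and then observe that the counts of positive and negative eigenvalues of $A_+$ are each at least those of $A$, forcing exactly one of them to increase by one. Your write-up is slightly more explicit about the final counting step, but the argument is the same.
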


\begin{proof}
	Let $\lambda_1 \leq \dots \leq \lambda_n$ be the eigenvalues of $A$, and let $\lambda_1^+ \leq \dots \leq \lambda_{n+1}^+$ be the eigenvalues of $A_+$.
	Cauchy's interlacing theorem states that
	$$\lambda_1^+ \leq \lambda_1 \leq \lambda_2^+ \leq \dots \leq \lambda_n \leq \lambda_{n+1}^+.$$
	Hence, the number of negative eigenvalues of $A_+$ is at least the number of negative eigenvalues of $A$, and similarly 
	the number of positive eigenvalues of $A_+$ is at least the number of positive eigenvalues of $A$. 
\end{proof}

\begin{lemma}
	\label{Lem:SurfaceSolidTorus}
	Let $V$ be a solid torus embedded in $S^3$. Pick a slope $\lambda$ on $\partial V$ that has winding number $1$ in $V$. Let $K$ be the knot on $\partial V$ that has slope $p \lambda + q \mu$, where $\mu$ is the meridian of $V$, and where $p$ is even and $q$ is odd. Then $K$ bounds a compact unoriented surface $F$ in $V$ with the property that the Goeritz form $G_F$ satisfies $|\sigma(G_F) + \kappa(p,q)| \leq 2$.
\end{lemma}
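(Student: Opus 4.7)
My plan is to obtain $F$ by pulling back a Goeritz surface for the $(p,q)$-torus knot from a standardly embedded solid torus into $V$, and to show that the change in the Goeritz form is a rank-one symmetric perturbation, after which Cauchy interlacing bounds the signature change by $2$. Let $V_0 \subset S^3$ be a standardly (unknotted) embedded solid torus with Seifert-framed longitude $\lambda_0$ and meridian $\mu_0$. By Lemma~\ref{Lem:TorusKnotGoeritz} there is an unoriented surface $F_0 \subset V_0$ with $\partial F_0 = T(p,q) = p\lambda_0 + q\mu_0$ and $\sigma(G_{F_0}) = -\kappa(p,q)$. Let $\lambda^S$ be the Seifert-framed longitude of $V$ in $S^3$ and write $\lambda = \lambda^S + m\mu$ for the integer $m$. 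Choose a homeomorphism $\phi \from V_0 \to V$ with $\phi(\lambda_0) = \lambda$ and $\phi(\mu_0) = \mu$, factored as $\phi = \phi^S \circ \tau^m$, where $\phi^S \from V_0 \to V$ sends $\lambda_0 \mapsto \lambda^S$ and $\mu_0 \mapsto \mu$, and $\tau \from V_0 \to V_0$ is a Dehn twist along a meridian disk. Set $F := \phi(F_0) \subset V$, so $\partial F = K$.

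The central computation is the linking identity, for disjoint $\alpha, \beta \subset V_0$ with winding numbers $w_\alpha, w_\beta$:
\[
\lk_{S^3}(\phi(\alpha), \phi(\beta)) \;=\; \lk_{S^3}(\alpha, \beta) \;+\; m\, w_\alpha w_\beta.
\]
I will deduce it from two sub-claims: first, $\phi^S$ preserves $S^3$-linking numbers, because the linking pairing for curves inside a solid torus depends only on the abstract solid-torus structure plus the Seifert framing of its core, both of which are matched by $\phi^S$; second, the Dehn twist $\tau$, viewed as a self-homeomorphism of the standardly embedded $V_0$, satisfies $\lk_{S^3}(\tau(\alpha),\tau(\beta)) = \lk_{S^3}(\alpha,\beta) + w_\alpha w_\beta$. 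The second sub-claim is checked by direct computation on $(\lambda_0,\lambda_0)$ (where $\tau$ produces the $(1,1)$-curve on $\partial V_0$, whose surface-framed self-linking is $1$) and extended by bilinearity over $H_1(V_0) \cong \ZZ$. Applying the identity to a basis $b_1,\dots,b_n$ of $H_1(F_0) \cong H_1(F)$ with winding-number vector $W = (w_1,\dots,w_n)^T$ and to the double push-offs $b_j'$ (which inherit the winding numbers of $b_j$) yields $G_F = G_{F_0} + m\,W W^T$.

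Since $m\,WW^T$ is symmetric of rank at most one, a standard application of Cauchy interlacing to rank-one symmetric perturbations (in the same vein as Lemma~\ref{Lem:AddRowColumn}) gives $|\sigma(G_F) - \sigma(G_{F_0})| \leq 2$. Combined with $\sigma(G_{F_0}) = -\kappa(p,q)$, this yields the claimed bound $|\sigma(G_F) + \kappa(p,q)| \leq 2$. The main obstacle is the first sub-claim in the previous paragraph, since $\phi^S$ does not extend to a self-homeomorphism of $S^3$ when $V$ is knotted; I would prove it by computing the linking of $\alpha, \beta \subset V$ via Seifert surfaces whose pieces inside $V$ and in $S^3 \setminus V$ are tracked separately, and observing that each such piece transforms naturally under a Seifert-framing-preserving homeomorphism.
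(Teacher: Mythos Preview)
Your approach is correct and closely parallel to the paper's, but you package the comparison differently. The paper also transports a surface between $V$ and a standardly embedded solid torus and compares the two Goeritz matrices. However, instead of deriving an explicit linking formula, the paper first chooses a basis $e_1,\dots,e_n$ of $H_1(F)$ with $e_1,\dots,e_{n-1}$ of winding number zero, and then argues (via Reidemeister moves and crossing changes on the core) that the two Goeritz matrices agree on the first $n-1$ rows and columns; Lemma~\ref{Lem:AddRowColumn} is then applied twice. Your route avoids the special basis by proving directly that $G_F - G_{F_0}$ is a rank-one symmetric matrix $c\,WW^T$ and invoking interlacing for rank-one perturbations; this is a little sharper (it shows the two matrices actually differ only by a rank-one term, not merely in one row and column) and makes the dependence on the framing parameter $m$ explicit. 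One small slip: the double push-off $b_j'$ has winding number $2w_j$, not $w_j$, so the perturbation is $2m\,WW^T$ rather than $m\,WW^T$; this is still rank one and does not affect the conclusion. Your identification of the ``main obstacle'' --- that $\phi^S$ preserves $S^3$-linking numbers even though it does not extend to $S^3$ --- is exactly right, and the argument you sketch (compute $\lk(\alpha,\beta)$ as $\beta\cdot S_\alpha$ for a surface $S_\alpha\subset V$ with $\partial S_\alpha=\alpha-w_\alpha\lambda^S$, then cap off outside $V$) goes through cleanly since the capping surfaces lie in $S^3\setminus\mathrm{int}(V)$ and hence miss $\beta$.
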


\begin{proof}
	Because $p$ is even, $K$ bounds a compact surface $F$ in $V$. We may pick a basis $e_1, \dots, e_n$ for $H_1(F)$ so that $e_1, \dots, e_{n-1}$ have zero winding number around~$V$. Let $V'$ be an embedding of $V$ in $S^3$ such that $K$ is sent to $T(p,q)$. Let $F'$ be the image of $F$. 
	
	We claim that the Goeritz forms $G_F$ and $G_{F'}$ agree on the first $n-1$ rows and columns. To prove this, we view $V'$ as the regular neighbourhood of a standard unknot embedded in the horizontal plane. Then, up to isotopy, $V$ can be obtained from $V'$ by applying Reidemeister moves and crossing changes to this unknot. None of these moves affects the first $n-1$ rows and columns of the Goeritz form, for the following reason. Any given entry of the Goeritz form is $\lk(b_i, b_j')$ for a suitable curve $b_i$ in the surface and $b_j'$ the double push-off of another curve in the surface. When the entry of the Goeritz form lies in the first $n-1$ rows and columns, these curves $b_i$ and $b_j'$ have zero winding number around the solid torus. Hence, geometrically, $b_i$ winds an equal number of times around the solid torus in opposite directions, as does $b_j'$. So, when we perform a Reidemeister move or a crossing change to the solid torus, and we compare the resulting projections of $b_i \cup b_j'$ to the horizontal plane, the sum of the signs of the crossings between $b_i$ and $b_j'$ remains unchanged. This sum is $2 \, \lk(b_i, b_j')$. This proves the claim.

	Hence, by Lemma~\ref{Lem:AddRowColumn}, we have $|\sigma(G_F) - \sigma(G_{F'})| \le 2$. But $\sigma(G_{F'}) = -\kappa(p,q)$ by Lemma~\ref{Lem:TorusKnotGoeritz}.
\end{proof}

\section{Highly twisted knots} \label{sec:highly-twisted}

The following is Theorem~\ref{Thm:SignatureSlopeTwisting} from the introduction:

\begin{T2}
	Let $K$ be a knot in the 3-sphere, and let $C_1, \dots, C_n$ be a collection of disjoint simple closed curves in the complement of $K$ that bound disjoint discs. Suppose that $S^3 \setminus (K \cup C_1 \cup \dots \cup C_n)$ is hyperbolic. Let $K(q_1, \dots, q_n)$ be the knot obtained from $K$ by adding $q_i$ full twists to the strings going through $C_i$, for each $i \in \{1, \dots, n\}$. Let $\ell_i$ be the linking number between $C_i$ and $K$, when they are both given some orientation. Suppose that $\ell_1, \dots, \ell_m$ are even and $\ell_{m+1}, \dots, \ell_n$ are odd. Then there is a constant $k$, depending on $K$ and $C_1, \dots, C_n$, such that the following hold, provided each $|q_i|$ is sufficiently large:
	\[
	\left | \slope(K(q_1, \dots, q_n)) + \sum_{i=1}^n \ell_i^2 q_i \right | \leq k;
	\]
	\[
	\left | \sigma(K(q_1, \dots, q_n)) + \left (  \frac{1}{2} \sum_{i=1}^m \ell_i^2 q_i + \frac{1}{2} \sum_{i=m+1}^n (\ell_i^2 -1) q_i \right ) \right | \leq k.
	\]
\end{T2}

One can use this to show that the factor $\inj(K)^{-3}$ cannot simply be dropped from Theorem~\ref{thm:main} (cf.~Conjecture~\ref{conj:asymptotic} for what we expect for random knots):

\begin{corollary}\label{cor:counterexample1}
	There does \emph{not} exist a constant $c_2$ such that 
	\[
	|2 \sigma(K) - \slope(K)| \leq c_2 \vol(K)
	\]
	for every hyperbolic knot $K$.
\end{corollary}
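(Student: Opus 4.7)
The plan is to exhibit a one-parameter family of hyperbolic knots $\{K_q\}$ for which $|2\sigma(K_q) - \slope(K_q)|$ grows linearly in $|q|$ while $\vol(K_q)$ stays bounded, which immediately precludes the existence of a uniform constant $c_2$. The conceptual engine is the mismatch already visible in Theorem~\ref{Thm:SignatureSlopeTwisting}: the slope asymptotic carries the coefficient $\ell_i^2$ for every~$i$, while the signature asymptotic carries $\ell_i^2$ only when $\ell_i$ is even, and the smaller coefficient $\ell_i^2 - 1$ when $\ell_i$ is odd. Twisting through a single disc with odd linking number thus produces a residual linear growth of $|2\sigma - \slope|$.

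First I would fix a hyperbolic knot $K_0 \subset S^3$ together with an unknotted circle $C \subset S^3 \setminus K_0$ bounding a disc, arranged so that the linking number $\ell := \lk(C,K_0)$ is odd (for instance $\ell = 3$, with $C$ encircling three parallel strands of $K_0$, as hinted in the introduction) and so that $S^3 \setminus (K_0 \cup C)$ is hyperbolic. Explicit examples can be produced and verified using SnapPy; hyperbolicity is generic for such augmented links. With $n=1$ and $m=0$, Theorem~\ref{Thm:SignatureSlopeTwisting} applied to $K_q := K_0(q)$ furnishes a constant $k$ such that, for all sufficiently large $|q|$,
\[
|\slope(K_q) + \ell^2 q| \leq k \quad\text{and}\quad \left|\sigma(K_q) + \tfrac{1}{2}(\ell^2 - 1)q\right| \leq k.
\]
Doubling the second estimate and subtracting the first, the triangle inequality gives
\[
|2\sigma(K_q) - \slope(K_q) - q| \leq 3k,
\]
so that $|2\sigma(K_q) - \slope(K_q)| \geq |q| - 3k$ grows linearly in $|q|$.

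To close the argument I would bound $\vol(K_q)$ by a constant $V_0$ depending only on $K_0$ and $C$. By Thurston's hyperbolic Dehn surgery theorem, $K_q$ arises from the hyperbolic manifold $S^3 \setminus (K_0 \cup C)$ by Dehn filling along the slope $1/q$ on the cusp of $C$, so for $|q|$ large $K_q$ is indeed hyperbolic and $\vol(K_q) < V_0 := \vol(S^3 \setminus (K_0 \cup C))$. Dividing the two estimates then produces
\[
\frac{|2\sigma(K_q) - \slope(K_q)|}{\vol(K_q)} \geq \frac{|q| - 3k}{V_0} \longrightarrow \infty,
\]
which contradicts the existence of any uniform $c_2$.

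The main potential obstacle is the very first step: pinning down a concrete pair $(K_0, C)$ with odd linking number whose link complement is genuinely hyperbolic, and checking that the hypotheses of Theorem~\ref{Thm:SignatureSlopeTwisting} are met. Once such an example is fixed, everything else is a routine combination of the two asymptotics in that theorem with Thurston's upper bound on Dehn-filled volumes.
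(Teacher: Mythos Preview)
Your proposal is correct and follows essentially the same approach as the paper: take $n=1$ with an odd linking number (the paper uses $\ell=3$), observe from Theorem~\ref{Thm:SignatureSlopeTwisting} that $\slope(K(q))\sim -\ell^2 q$ while $2\sigma(K(q))\sim -(\ell^2-1)q$, so the difference grows like $|q|$ while the volume stays bounded by Thurston's Dehn surgery theorem. The paper's proof is a terse three-line version of exactly this; your additional care about the triangle-inequality bookkeeping and the existence of a hyperbolic pair $(K_0,C)$ is appropriate but does not change the argument.
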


\begin{proof} 
	Pick $n=1$ and $\ell_1 = 3$. Then $\slope(K(q_1)) \sim -9q_1$, whereas $2 \sigma(K(q_1)) \sim -8 q_1$. On the other hand $\vol(K(q_1))$ is bounded.
\end{proof}

\begin{proof}[Proof of Theorem \ref{Thm:SignatureSlopeTwisting}]
	The knot $K(q_1, \dots, q_n)$ is obtained by performing $-1/q_i$ surgery on $C_i$, for each $i \in \{1, \dots, n\}$. Let $L$ denote the link $K \cup C_1 \cup \dots \cup C_n$. By Thurston's Hyperbolic Dehn Surgery theorem, as all the $|q_i|$ tend to infinity, the hyperbolic structures on $S^3 \setminus K(q_1, \dots, q_n)$ tend in the geometric topology to the hyperbolic structure on $S^3 \setminus L$. In fact, more it true. Fix a horoball neighbourhood $N$ of the cusps of $S^3 \setminus L$ that is small enough so that the cusp torus $T$ surrounding $K$ lies in the complement of $N$. Then, if all the $|q_i|$ are sufficiently large, the inclusion $(S^3 \setminus L) \setminus N \rightarrow S^3 \setminus K(q_1, \dots, q_m)$ is a bi-Lipschitz homeomorphism onto its image, with bi-Lipschitz constants that tend to $1$ as all the $|q_i|$ tend to infinity. (See \cite{Benedetti-Petronio} for instance.)
	
	Let $\lambda(K)$ be the longitude and $\mu(K)$ the meridian of $K$. These form a basis of the lattice $\Lambda(K)$, where the cusp torus of $K$ in $S^3 \setminus L$ is $\CC / \Lambda(K)$. Let $\gamma$ be the image of $\lambda(K)$ and $\mu$ the image of $\mu(K)$ on the cusp torus $\CC / \Lambda(K(q_1, \dots, q_n))$ of $K(q_1, \dots, q_n)$. The curves $\gamma$ and $\mu$ form a basis of the lattice $\Lambda(K(q_1, \dots, q_n))$. So, we may assume that $\gamma$ and $\mu$ are approximately constant complex numbers when $|q_i|$ are large. However, we have \emph{not} normalised the lattice so that $\gamma$ is real. We know that there is some $N \in \RR_+$ such that 
	\[
	N \mu^\perp = \gamma - s' \mu 
	\]
	for some $s' \in \mathbb{R}$. Here, $N$, $\mu^\perp$, $\gamma$, $s'$, and $\mu$ all depend on $q_1, \dots, q_n$. But $N$ and $s'$ tend to fixed real numbers as the $|q_i|$ go to infinity.
	
	The key observation is that $\gamma$ is \emph{not} necessarily the longitude $\lambda$ for $K(q_1, \dots, q_n)$. In fact, the linking number between $\gamma$ and $K(q_1, \dots, q_n)$ is $\sum_i \ell_i^2 q_i$; see Figure~\ref{Fig:Linking}. For suppose that the disc bounded by $C_i$ intersects $K$ in $p_-$ points of negative sign and $p_+$ points of positive sign. So, $\ell_i = p_+ - p_-$. Then, when we perform a full twist about $C_i$, we introduce $2(p_+ + p_-)^2$ new crossings between $\gamma$ and $K(q_1, \dots, q_n)$. Of these, $2(p_+^2 + p_-^2)$ have positive sign and $4p_+p_-$ have negative sign. So the linking number between $\gamma$ and $K(q_1, \dots, q_n)$ changes by 
	\[
	p_+^2 + p_-^2 - 2p_+ p_- = \ell_i^2.
	\]
	It follows that
	\[
	\gamma = \lambda + \left(\sum_{i=1}^n \ell_i^2 q_i \right) \mu,
	\]
	and hence 
	\[
	N \mu^\perp = \lambda - \left(s' - \sum_{i=1}^n \ell_i q_i^2 \right) \mu.
	\]
	We conclude that $\slope(K(q_1, \dots, q_n)) = s' - \sum_{i=1}^n \ell_i q_i^2$. On the other hand, there is a constant $k$ such that $|s'| \le k$ if $|q_1|, \dots, |q_n|$ are sufficiently large, which implies the first inequality of the theorem.
	
	\begin{figure}
		\centering
		\includegraphics[width=0.5\textwidth]{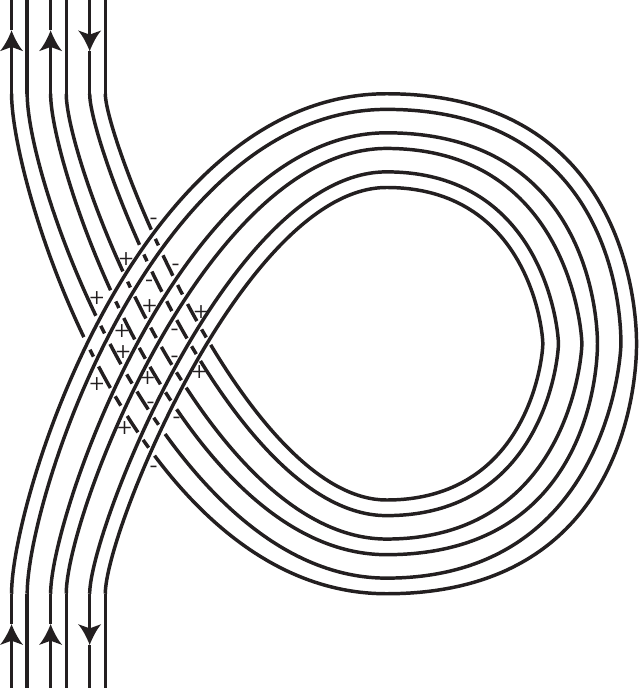}
		\caption{Each full twist about $C_i$ changes the linking number between $\gamma$ and $K(q_1, \dots, q_n)$ by $\ell_i^2$.}
		\label{Fig:Linking}
	\end{figure}
	
	Recall that $\ell_{m+1}, \dots, \ell_n$ are odd. Suppose that $q_{m+1}, \dots, q_r$ are even and that $q_{r+1}, \dots, q_n$ are odd. Let $\mu_{m+1}, \dots, \mu_r$ be meridians for $C_{m+1}, \dots, C_r$, respectively. Let $F$ be a spanning surface for 
	\[
	K \cup \mu_{m+1} \cup \dots \cup \mu_r \cup C_{r+1} \cup \dots \cup C_n.
	\]
	Since this link has even linking number with each component of $C_1 \cup \dots \cup C_r$, we may choose this spanning surface to be disjoint from these components. We can view this surface as properly embedded in the exterior of $K \cup C_1 \cup \dots \cup C_n$. It is disjoint from $\partial N(C_1) \cup \dots \cap \partial N(C_m)$. We have $F \cap \partial N(C_i) = \mu_i$ for $i \in \{ m+1, \dots, r \}$. For $i \in \{r+1, \dots, n\}$, the curve $F \cap \partial N(C_i)$ has slope equal to a longitude plus an odd number of meridians. By choosing the surface appropriately, we can ensure that this odd number is $1$. 
	
	Now perform surgery along $C_1, \dots, C_n$. The surface becomes a surface in the exterior of the new link. On $\partial N(C_i)$ for $i \in \{m+1, \dots, r\}$, it now has slope equal to a meridian plus $q_i$ longitudes. On $\partial N(C_i)$ for $i \in \{r+1, \dots, n\}$, it is a meridian plus $q_i + 1$ longitudes. Since we are assuming that $|q_i|$ is sufficiently large, we can suppose that $q_i \not= 0,-1$ and hence that this slope is not meridional. Within each solid torus $N(C_{m+1}), \dots, N(C_n)$, we can now insert a surface, as shown in Figure~\ref{Fig:SpanningSurfaceTwist}. Let $F'$ denote the resulting spanning surface of $K(q_1, \dots, q_n)$.
	
	\begin{figure}[h]
		\centering
		\includegraphics[width=0.5\textwidth]{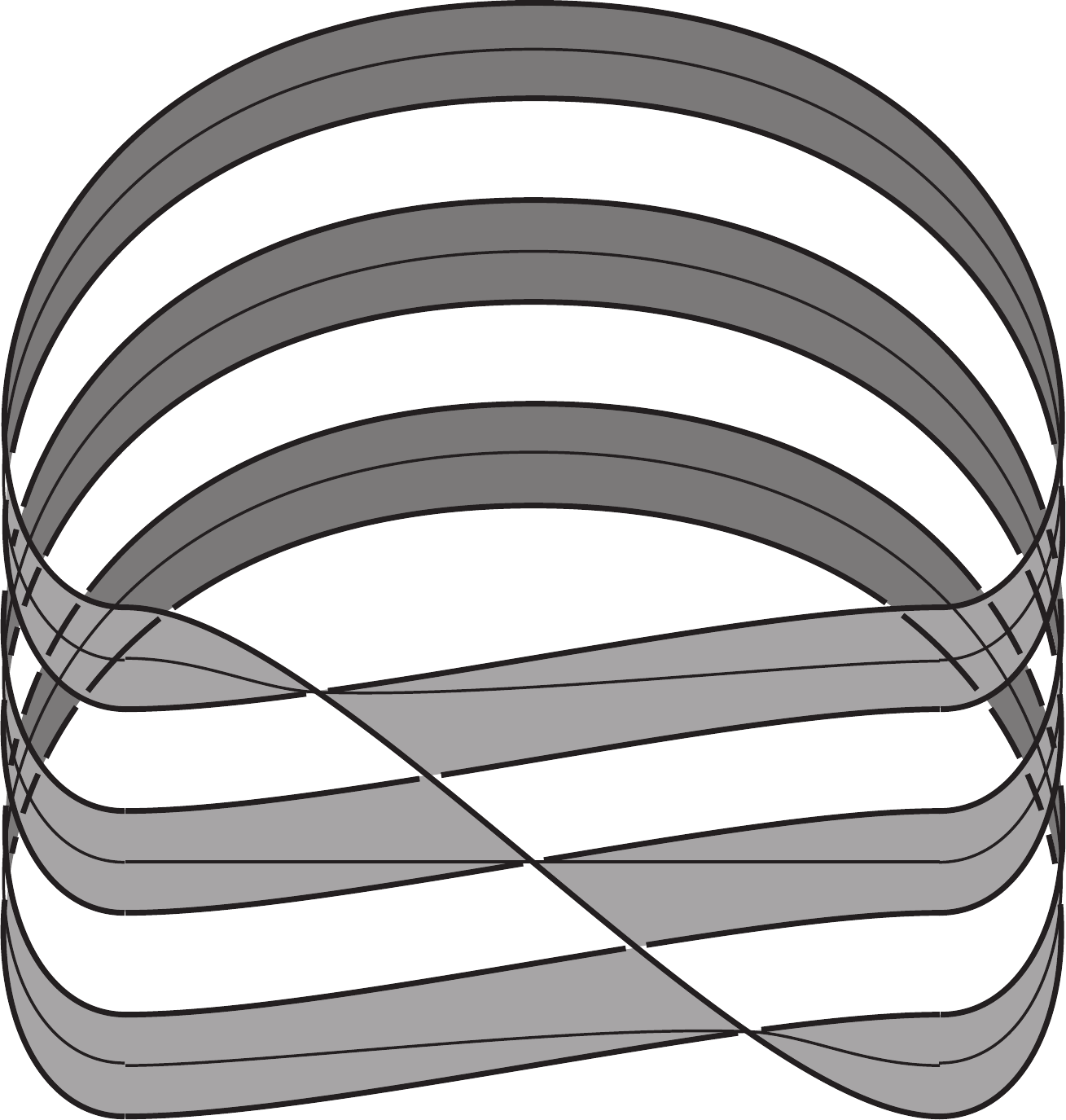}
		\caption{The part of the spanning surface in $N(C_i)$ for $i \geq m+1$. Here, $q_i = 5$ or $6$.}
		\label{Fig:SpanningSurfaceTwist}
	\end{figure}
	
	Also shown in Figure \ref{Fig:SpanningSurfaceTwist} is a collection of generators for $H_1(F' \cap N(C_i))$ for $i \geq m+1$. Note that $H_1(F' \cap N(C_i))$, for $i \geq m+1$, form direct summands of $H_1(F')$. So we can extend this set of generators to a basis of $H_1(F')$, by adding further elements of $H_1(F)$. 
	The associated Goeritz form $G_F$ is diagonal when restricted to the rows and columns corresponding to $H_1(F' \cap N(C_1 \cup \dots \cup C_n))$. Each $C_i$ gives rise to $|q_i|/2$ diagonal entries when $m+1 \leq i \leq r$ and $|q_i + 1|/2$ entries when $r+1 \leq i \leq n$. These entries are $+1$ when $q_i$ is positive and $-1$ when $q_i$ is negative. Hence, the signature of this matrix differs from $\sum_{i=m+1}^n q_i/2$ by at most $(n-r)/2$. So, applying Lemma~\ref{Lem:AddRowColumn}, 
	\[
	\left|\sigma(G_{F'}) - \sum_{i=m+1}^n q_i/2 \right|
	\]
	is bounded.
	
	Theorem~\ref{thm:Gordon-Litherland} due to Gordon and Litherland states that 
	\[
	\sigma(K(q_1, \dots, q_n)) = \sigma(G_{F'}) + \frac{e(F')}{2}.
	\]
	Here 
	\[
	e(F') = -\lk(K(q_1, \dots, q_n), \d F') = -\lk(K, \d F') - \sum_{i=1}^n \ell_i^2 q_i.
	\]
	The second inequality of the theorem follows immediately. 
\end{proof}

\section{Proof of Theorem~\ref{Thm:SignatureCorrection}} \label{sec:signature-correction-proof}

In this section, we prove Theorem~\ref{Thm:SignatureCorrection} from the introduction:

\begin{T1}
	Let $\eps_3$ be the Margulis constant, and let $\eps \in (0, \eps_3)$. Then
	there is a constant $c_4$ (depending on $\eps$) such that for any hyperbolic knot $K$, the quantities $\sigma(K)$ and
	\[
	\slope(K)/2 - \sum_{\gamma \in \mathrm{OddGeo}(\eps/2)} \kappa(\mathrm{tw}_p(\gamma), \mathrm{tw}_q(\gamma)) 
	\]
	differ by at most $c_4 \vol(K)$. 
\end{T1}

Note that if we set $\eps = \eps_3/2$, then $c_4$ then becomes a universal constant. However, given the present uncertainty about the precise value of $\eps_3$, we do not specify $\eps$ definitively.

\begin{definition}
	\label{Def:CanonicalLongitude}
	Let $\gamma$ be an embedded closed geodesic in the hyperbolic 3-manifold $M$, and let $N(\gamma)$ be a regular neighbourhood of $\gamma$ consisting of points at most a certain distance $r$ from $\gamma$. Let $\tilde \gamma$ be a component of the inverse image of $\gamma$ in $\mathbb{H}^3$, which we can take to be $\{ (0,0,z): z > 0 \}$ in the upper half-space model. Let $N(\tilde \gamma)$ be the component of the inverse image of $N(\gamma)$ containing $\tilde \gamma$. We let $\lambda$ be the slope on $\partial N(\gamma)$ that has winding number one around $N(\gamma)$ and that lifts to a path in $N(\tilde \gamma)$ starting on the half-plane $\{ (x,y,z) : y = 0, x \geq 0 \}$ and with interior that is disjoint from the half plane $\{ (x,y,z) : y = 0, x \leq 0 \}$. In the event that this path ends precisely on the half plane $\{ (x,y,z) : y = 0, x \leq 0 \}$, $\lambda$ is chosen so that it  avoids $\{ (x,y,z) : y \leq 0, x = 0 \}$. Then $\lambda$ is called the \emph{canonical longitude} of $\gamma$. Note that it does not necessarily have zero linking number with $\gamma$.
\end{definition}

There is the following alternative interpretation of the canonical longitude in terms of the complex length of $\gamma$. 	We give $T = \partial N(\gamma)$ its inherited Riemannian metric. This is homogeneous, since any two points of $T$ differ by an isometry of $T$. The metric on $T$ therefore has constant curvature, which must be zero by the Gauss--Bonnet theorem. It is therefore Euclidean. We can represent it as the quotient of the Euclidean plane $\mathbb{E}^2$ by a lattice $\mathcal{L}$. Each slope on $T$ corresponds to a lattice point. We can assume that the lattice point corresponding to the meridian is a purely imaginary number $\mu$. As the circumference of a radius $r$ circle in the hyperbolic plane is $2\pi \sinh(r)$, we have 
	\[
	\mu = 2 \pi  \sinh(r) i, 
	\]
	where $r$ is the radius of the tube around $\gamma$. Let $\nu$ be a geodesic in $T$ that is perpendicular to a meridian and that starts and ends on the meridian (but not necessarily at the same point). Then 
	\[
	\ell(\nu) = \cosh(r) \Re(\cl(\gamma)), 
	\]
	where $\cl(\gamma)$ is the complex length of the geodesic $\gamma$ and $\Re(\cl(\gamma)) = \ell(\gamma)$; see \cite[Equation~(2.2)]{tubes}. Then the canonical longitude of $T$ is
	\[
	\lambda = \cosh(r) \Re(\cl(\gamma)) +  \sinh(r) \Im(\cl(\gamma)) i.
	\] 
	
The significance of the twisting parameter arises from the following lemma.

\begin{lemma}
	\label{Lem:TwistSlopeShort}
	Let $M$ be a hyperbolic 3-manifold and $\eps \in (0, \eps_3)$. Let $\gamma$ be a geodesic in $M$ with $\ell(\gamma) < \eps/2$. Let $T$ be the toral boundary component of $M_{(0,3\eps/4]}$ that encloses $\gamma$, let $\mu \subset T$ be a meridian of $\gamma$ and let $\lambda$ be the canonical longitude. If $(p,q) = (\mathrm{tw}_p(\gamma), \mathrm{tw}_q(\gamma))$, then
	\[
	\ell(p \lambda + q \mu) \le c_5 \Area(T)
	\] 
	for some constant $c_5$ depending only on $\eps$.
\end{lemma}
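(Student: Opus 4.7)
The plan is to translate the bound on $\ell(p\lambda + q\mu)$ into a two-dimensional lattice reduction problem on the Euclidean torus $T$. The bridge between the geodesic-length optimisation and the Minkowski-theoretic one comes from the alternative description of the canonical longitude in terms of the complex length that appears immediately before the statement. Identifying $T$ isometrically with $\CC/\Lambda(T)$ via that description, with $\mu = 2\pi \sinh(r)\,i$ and $\lambda = \cosh(r)\ell(\gamma) + \sinh(r)\im(\cl(\gamma))\,i$, a direct expansion gives
\[
|p\lambda + q\mu|^2 = \sinh^2(r)\,|p\,\cl(\gamma) + 2\pi i q|^2 + p^2 \ell(\gamma)^2,
\]
whence the two-sided sandwich $\sinh(r)\,|p\,\cl(\gamma) + 2\pi i q| \le |p\lambda + q\mu| \le \cosh(r)\,|p\,\cl(\gamma) + 2\pi i q|$, and $\Area(T) = 2\pi\sinh(r)\cosh(r)\ell(\gamma)$.

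Two uniform geometric estimates depending only on $\eps$ will then be needed. First, the tube radius satisfies $r \ge r_0(\eps) > 0$: since $T \subset \partial M_{(0,3\eps/4]}$, the displacement identity $\cosh(3\eps/2) = \cosh^2(r)\cosh(\ell(\gamma)) - \sinh^2(r)\cos(\im\,\cl(\gamma))$ combined with $\ell(\gamma) < \eps/2$ yields $\sinh^2(r) \ge (\cosh(3\eps/2) - \cosh(\eps/2))/(\cosh(\eps/2)+1)$ in the worst case $\cos\im(\cl(\gamma)) = -1$. Second, the first successive minimum of $\Lambda(T)$ is bounded below by $L_0(\eps) := \min(2\pi\sinh(r_0),\, 3\eps/2)$, because every point on $T$ has injectivity radius equal to $3\eps/4$, so every slope on $T$ that is non-trivial in $M$ has length at least $3\eps/2$, while the only slopes trivial in $M$ are integer multiples of the meridian $\mu$, whose length is $2\pi\sinh(r) \ge 2\pi\sinh(r_0)$.

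I would then apply Minkowski's second theorem in dimension two: $\lambda_1(\Lambda(T))\,\lambda_2(\Lambda(T)) \le (4/\pi)\,\Area(T)$, giving $\lambda_2(\Lambda(T)) \le (4/(\pi L_0))\,\Area(T)$. The covering radius of the index-$4$ sublattice $2\Lambda(T)$ is at most $\lambda_2(2\Lambda(T)) = 2\lambda_2(\Lambda(T))$, so there exists an integer pair $(p',q')$ with $p'$ even and $q'$ odd such that $|p'\lambda + q'\mu| \le (8/(\pi L_0))\,\Area(T)$; since any common divisor of $(p',q')$ must be odd, dividing through preserves parity and only shortens the vector, so $(p',q')$ may be taken coprime. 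Now the twisting parameter $(p,q) = (\tw_p(\gamma), \tw_q(\gamma))$ minimises $|p\,\cl(\gamma) + 2\pi i q|$ over coprime parity pairs (sign and tie-breaking conventions do not affect the modulus), and chaining the sandwich inequalities gives
\[
|p\lambda + q\mu| \le \cosh(r)\,|p\,\cl(\gamma) + 2\pi i q| \le \cosh(r)\,|p'\,\cl(\gamma) + 2\pi i q'| \le \coth(r)\,|p'\lambda + q'\mu| \le c_5\,\Area(T),
\]
with $c_5 = (8\coth(r_0))/(\pi L_0)$, proving the lemma.

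The chief obstacle is the tube-radius estimate: when $\im(\cl(\gamma))$ approaches $\pm\pi$, the formula for $\sinh^2(r)$ becomes decreasing in $\ell(\gamma)$, so the hypothesis $\ell(\gamma) < \eps/2$ must be used essentially to prevent $r$ from collapsing as the complex length degenerates. A secondary subtlety is the need to track both the pair $(p,q)$ minimising $|p\cl(\gamma) + 2\pi i q|$ (the twisting parameter) and the pair $(p',q')$ minimising $|p\lambda + q\mu|$ on $T$, and to show that the cost of interchanging them is only the bounded factor $\coth(r_0)$ arising from the sandwich inequality.
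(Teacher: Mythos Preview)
Your argument is correct. It takes essentially the same shape as the paper's---lower-bound the tube radius, lower-bound the systole of $T$, find a short slope in the required $\ZZ_2$-homology class, then transfer to the twisting parameter at the cost of a $\coth(r)$ factor---but the individual steps are packaged differently. The paper obtains $r>\eps/8$ by the more elementary observation that an embedded $3\eps/8$-ball about any $y\in T$ contains an embedded $(3\eps/8-r)$-ball about the nearest point of $\gamma$, forcing $3\eps/8-r<\ell(\gamma)/2<\eps/4$; you use the displacement formula instead (note that with the paper's convention for $M_{(0,\delta]}$ the shortest nontrivial loop on $T$ has length $3\eps/4$ and the injectivity radius is $3\eps/8$, so your constants $3\eps/2$ and $3\eps/4$ are each doubled, but this is harmless). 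For the short slope in the coset, the paper constructs one by hand from the systole $\gamma_0$ and a perpendicular arc, which forces it to establish both an upper bound $L\le k_0'$ on the systole (via a nearest-point-projection argument inside the Margulis tube) and a lower bound $\Area(T)\ge a_0$ before it can absorb $\tfrac32 L+\Area(T)/L$ into $k_1\Area(T)$; your Minkowski-plus-covering-radius route is cleaner here and needs only the lower bound $\lambda_1\ge L_0$. The final $\coth(r)$ transfer is literally the same in both proofs: the paper's rescaled torus $T'$ (scaling by $\tanh(r)$ in the longitudinal direction) is your sandwich inequality $\sinh(r)\,|p\,\cl(\gamma)+2\pi i q|\le |p\lambda+q\mu|\le \cosh(r)\,|p\,\cl(\gamma)+2\pi i q|$ in disguise.
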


\begin{proof}
	By the Margulis lemma, the component $V$ of $M_{(0,3\eps/4]}$ containing $T$ is a solid torus, with $\gamma$ as a core curve. We claim that the tube radius $r$ of $V$ satisfies $r > \eps/8$. Indeed, note that $\gamma$ has length $\ell(\gamma) < \eps/2$, whereas at each point $y \in T$, the open ball $B(y, 3\eps/8)$ is embedded. If $r < 3\eps / 8$ and $x \in \gamma$ satisfies $d(x, y) = r$, then $B(x, 3\eps/8 - r) \subset B(y, 3\eps/8)$ is an embedded ball about $x$. So 
	\[
	3\eps / 8 - r < \ell(\gamma) / 2 < \eps /4, 
	\]
	and hence $r > \eps / 8$, as claimed.

	Suppose that $\gamma_0$  is a shortest geodesic on $T$, and let $L := \ell(\gamma_0)$. We claim that 
	\[
	L \in [k_0, k_0'] 
	\]
	for constants $k_0$, $k_0' \in \RR_+$ depending only on $\eps$. Since $T \subset \d M_{(0, 3\eps / 4]}$, every point $p \in T$ has two lifts to $\HH^3$ that are exactly $3\eps/4$ apart, and no two lifts of $p$ are less than $3\eps/4$ apart. The meridian of $T$ has length 
	\[
	\ell(\mu) = 2 \pi \sinh(r) > 2 \pi \sinh(\eps/8). 
	\]
	If $s$ is a slope different from the meridian, then $[s] = m [\gamma] \in \pi_1(M)$ for $m \neq 0$. As $[\gamma]$ has infinite order in $\pi_1(M)$, the lift $\tilde{s}$ of $s$ to $\HH^3$ satisfies $\tilde{s}(0) \neq \tilde{s}(1)$. Then 
	\[
	\ell(s) \ge d_{\HH^3}(\tilde{s}(0), \tilde{s}(1)) \ge 3\eps/4, 
	\]
	so we can set $k_0 := \min(3\eps/4, 2 \pi \sinh(\eps / 8))$.
	
	We now give an upper bound on $L$. Let $s$ be a slope on $T$ whose lift $\tilde{s}$ to $\HH^3$ satisfies $d_{\HH^3}(\tilde{s}(0), \tilde{s}(1)) = 3\eps/4$. This is again possible since $T \subset \d M_{(0, 3\eps/4]}$. If $r \le 2 \eps$, then $L \le |\mu| \le 2 \pi \sinh(2 \eps)$. Now suppose that $r > 2 \eps$. Let $N(\gamma) \subset V$ be a regular neighbourhood of $\gamma$ of radius $r-\eps$. Let $\tilde{\beta}$ be a geodesic in $\HH^3$ connecting $\tilde{s}(0)$ and $\tilde{s}(1)$, and let $\beta$ be its projection to $M$. Then $\beta$ is a geodesic homotopic to $s$ of length $3\eps/4$, which hence lies in $V \setminus N(\gamma)$. The nearest point projection $\varphi \colon V \setminus N(\gamma) \to T$ satisfies $\ell(\varphi(\beta)) \le l_0 \ell(\beta) = l_0 (3\eps/4)$ for a constant $l_0$ depending only on $\eps$. Hence 
	\[
	L \le k_0' := \max(2 \pi \sinh(2 \eps), 3l_0 \eps/4), 
	\]
	as claimed. 
	
	A consequence of $L \ge k_0$ is that $\Area(T) \ge a_0$ for a constant $a_0$ depending only on $\eps$. Indeed, a disc $D$ of radius $L/2$ on $T$ about an arbitrary point of $T$ is embedded, so 
	\[
	\Area(T) \ge \Area(D) = (L/2)^2 \pi \ge (k_0 / 2)^2 \pi =: a_0.
	\]
	
	We claim the length of the shortest curve in any nontrivial class
	in $H_1(T; \ZZ_2)$ is at most $k_1 \Area(T)$ for a constant $k_1$ depending on $\eps$. Indeed, let $\gamma_0^\perp \colon I \to T$ be a geodesic arc starting and ending on the shortest geodesic $\gamma_0$ and orthogonal to it.
	Then $\ell(\gamma_0^\perp) = \Area(T)/L$. The points $\gamma_0^\perp(0)$ and $\gamma_0^\perp(1)$ divide $\gamma_0$ into two arcs, one of which has length at most $L/2$. Let $\gamma_1$ be a geodesic representative of the closed curve that runs along $\gamma_0^\perp$ and then along the shorter of the two arcs in $\gamma_0$. 
	We obtain that 
	\[
	\ell(\gamma_1) \le L/2 + \Area(T)/L. 
	\]
	The curves $\gamma_0$ and $\gamma_1$ give a basis for $H_1(T; \ZZ_2)$. Hence, the shortest representative of every nontrivial class in $H_1(T; \ZZ_2)$ is at most $L + (L/2 + \Area(T)/L)$. As $L \in [k_0, k_0']$ and $\Area(T) \ge a_0$, we have 
	\[
	L + (L/2 + \Area(T)/L) \le k_1 \Area(T)
	\]
	for $k_1 := \frac32 \frac{k_0'}{a_0} + \frac{1}{k_0}$. Indeed,
	\[
	\frac32 L \le \frac32 k_0' = \left (k_1 - \frac{1}{k_0} \right ) a_0 \le \left(k_1 - \frac1L \right) \Area(T).
	\]
	So there is some slope $(a, b)$ on $T$ with $a$ even and $b$ odd such that
	\[
	\ell(a \lambda + b \mu) \le k_1 \Area(T)
	\]
	for some constant $k_1$ depending on $\eps$.
	
	Let $T'$ be the torus obtained from $T$ by scaling by $\tanh(r)$ in the $\nu$-direction. As $r > \eps / 8$, we have $\tanh(r) \in (\tanh(\eps / 8), 1)$. Since $\tanh(r) < 1$, the shortest slope $(p, q)$ on $T'$ with $p$ even and $q$ odd has length at most $k_1 \Area(T)$.  The lattice that specifies $T'$ is generated by 
	\[
	\lambda' := \tanh(r) \cosh(r) \Re(\cl(\gamma)) + \sinh(r) \Im(\cl(\gamma)) i = \sinh(r) \cl(\gamma) 
	\]
	and $\mu = 2 \pi \sinh(r) i$. So 
	\[
	\ell(p \lambda' + q \mu) = | \cl(\gamma) p + 2 \pi i q | |\sinh(r)|. 
	\]
	Hence, by Definition~\ref{def:twisting}, the slope $p \lambda' + q \mu$ on $T'$ is the shortest among slopes for which $p$ is even and $q$ is odd.  Therefore, its length on $T'$ is at most $k_1 \Area(T)$. So
	\[
	\ell(p \lambda + q \mu) \le (k_1 / \tanh(r)) \Area(T) < (k_1 / \tanh(\eps / 8)) \Area(T). 
	\]
	So we can set $c_5 := k_1 / \tanh(\eps / 8)$, which concludes the proof of the lemma.
\end{proof}

\begin{proof}[Proof of Theorem \ref{Thm:SignatureCorrection}]
	We claim that we can build a triangulation $\mathcal{T}$ of $M_{[3\eps/4,\infty)}$ with the following properties:
	\begin{enumerate}
		\item The number of tetrahedra of $\mathcal{T}$ is at most $c \vol(K)$, where $c$ depends on $\eps$.
		\item If $n$ is a closest even integer to $\slope(K)$, then some Euclidean geodesic with slope $\lambda - n \mu$ on $\partial N(K)$ is a normal curve in $\d M_{[3\eps/4,\infty)}$ that intersects each edge of $\mathcal{T}$ at most once.
		\item\label{it:short-edges} On the component $T$ of $\d M_{[3\eps/4,\infty)}$ corresponding to $\partial N(K)$, the edges of $\mathcal{T}$ are Euclidean geodesics with length at most $\eps/15$.		
	\end{enumerate}

	We follow the construction in the proof of Proposition~\ref{prop:triangulation}, but with different constants. We pick a maximal collection of points in $\d M_{[3\eps/4,\infty)}$ that are at least $\eps/30$ apart. We then add points to this collection in the interior of $M_{[3\eps/4,\infty)}$ that have distance at least $\eps/15$ from each other and from the earlier points. We stop when it is not possible to add any further points, and denote the resulting collection by $P$. We then form the associated Voronoi diagram, subdivide the 2-cells of this cell structure into triangles without adding any new vertices, and then triangulate each 3-cell by coning from the relevant point of $P$. Let $\mathcal{T}$ be the resulting triangulation of $M_{[3\eps/4,\infty)}$.
	
	Exactly the same argument as in the proof of Proposition~\ref{prop:triangulation} gives that the number of tetrahedra of $\mathcal{T}$ is at most $c \vol(K)$, where $c$ depends on $\eps$. The length of each edge in $\d M_{[3\eps/4,\infty)}$ is now at most $\eps/15$, because we took points that were at least $\eps/30$ apart, rather than at least $\eps/8$ apart. Thus, all that needs to be proved are that the edges of $\mathcal{T}$ in $T$ are Euclidean geodesics and that there is a Euclidean geodesic with slope $\lambda - n \mu$ on $\partial N(K)$ which is a normal curve in $\d M_{[3\eps/4,\infty)}$ that intersects each edge of $\mathcal{T}$ at most once.

	We start by showing that the edges of $\mathcal{T}$ in $T$ are Euclidean geodesics. Following the proof of Proposition~\ref{prop:triangulation}, we need to show that, for each point $x$ on $T$, its closest points in $P$ all lie in $T$ and have distance at most $\eps/30$ from $x$. We also need to show that the shortest geodesic joining $x$ to any of these points remains within the cusp. The first of these statements holds by our choice of $P$.
	
	Note that $T$ lies within $M_{(0,\eps]}$. By definition of the Margulis constant, $M_{(0,\eps]}$ consists of a cusp and some regular neighbourhoods of geodesics with length at most $\eps$. The Euclidean metrics on $T$ and the cusp component of $\partial M_{(0,\eps]}$ differ by a Euclidean scale factor of $4/3$, and hence are hyperbolic distance $\ln(4/3) > 0.287$ from each other. On the other hand, the 3-dimensional Margulis constant satisfies $\eps_3 < 0.775$. (See the discussion in \cite[Section 1.1]{FuterPurcellSchleimer}.) Hence, $\eps/30 < \ln(4/3)$. We deduce that for each point $x$ in $T$, any shortest geodesic to a closest point in $P$ must lie in the cusp. This implies that the restriction to $T$ of the Voronoi diagram for $P$ in $M$ is equal to the Voronoi diagram for $P \cap T$ in $T$ with its Euclidean metric. In particular, the edges of $\mathcal{T}$ in $T$ are Euclidean geodesics, as claimed in (\ref{it:short-edges}).

	Let $N_{\mathrm{max}}$ be a maximal cusp neighbourhood around $K$. Then $N_{\mathrm{max}}$ contains $T$. This torus $T$ is a scaled copy of $\partial N_{\mathrm{max}}$. It is scaled so that for each point on $T$, two lifts of this point in $\mathbb{H}^3$ are exactly $3\eps/4$ apart and no two lifts of this point are any closer than this. Say that $d$ is the hyperbolic distance between $T$ and $\partial N_{\mathrm{max}}$. Then the scale factor taking $\partial N_{\mathrm{max}}$ to $T$ is $e^{-d}$. Now the meridian slope on $\partial N_{\mathrm{max}}$ has length at most $6$. Hence, the meridian slope on $T$ has length at most $6 e^{-d}$. So any point on $T$ has two lifts to $\mathbb{H}^3$ that are less than $6e^{-d}$ apart, and therefore, $3\eps/4 \leq 6 e^{-d}$. As in the proof of Proposition~\ref{prop:triangulation}, let $h$ be the length in $\partial N_{\mathrm{max}}$ of a Euclidean geodesic that starts and ends on a geodesic with slope $\lambda - n \mu$ and that is orthogonal to this geodesic. It was shown there that $h \geq 0.55$. Hence, the length of the corresponding geodesic on $T$ is at least $0.55 e^{-d} \geq (0.55/6) (3\eps/4)$. On the other hand, the length of each edge of $\mathcal{T}$ on $T$ is at most $\eps/15$, and $\eps/15 < (0.55/6) (3\eps/4)$. Hence, each such edge can intersect any geodesic with slope $\lambda - n \mu$ at most once. This establishes the claimed properties of $\mathcal{T}$.
		
	Let $T_1, \dots, T_m$ be the components of $\d M_{[3\eps/4, \infty)}$, where $T_i$ encircles a geodesic $\gamma_i \in \mathrm{OddGeo}(\eps/2)$. Let $\tw(\gamma_i) = p \lambda_i + q \mu_i$, where $\lambda_i$ is the canonical longitude on $T_i$ and $\mu_i$ is the meridian, and let $C_i$ be a curve on $T_i$ with this slope.
Then 
	\[
	\ell(C_i) \le c_5 \Area(T_i) 
	\]
	by Lemma~\ref{Lem:TwistSlopeShort}. Let 
	\[
	C := \bigcup_{i=1}^m C_i. 
	\]
	Realise each $C_i$ as a Euclidean geodesic in $T_i$ missing the vertices of $T_i$, and hence as a normal curve in $T_i$. Since $\ell(C_i) \le c_5 \Area(T_i)$ and by property~\eqref{it:short-edges} of the triangulation $\cT$, the normal representative of $C_i$ intersects each edge of $\mathcal{T}$ at most $c_5' \Area(T_i)$ times for a constant $c_5'$ depending only on $\eps$.
	
	We claim that there is a connected normal curve $C_i'$ in $T_i$ for $i \in \{1, \dots, m\}$ with the following properties:
	\begin{enumerate}
		\item $C_i'$ and $C_i$ are equal in $H_1(T_i; \ZZ_2)$;
		\item $C_i'$ intersects each edge of $\mathcal{T}$ at most once. 
	\end{enumerate}
	This is constructed as follows. For each edge of $\mathcal{T}$ that intersects $C_i$ an odd number of times, replace this intersection by a single point of intersection. These will be the points of intersection between $C_i'$ and the 1-skeleton of $\mathcal{T}$. Since $|C_i \cap \d t|$ is even for each triangle $t$ of $\cT$, we have $|C_i' \cap \d t| \in \{0, 2\}$. If $|C_i' \cap \d t| = 2$, join the two points of $C_i' \cap \d t$ by a normal arc of $C_i'$. The result is a collection of simple closed curves in $T_i$ that are mod 2 homologous to $C_i$. If any of these curves are inessential in $T_i$, remove them. The resulting curves are essential in $T_i$. Since they are non-trivial in mod 2 homology, they consist of an odd number of parallel copies of a curve. If this odd number is greater than one, remove all but one of these curves. The result is $C_i'$, and we write 
	\[
	C' := \bigcup_{i=1}^m C_i'.
	\]
	
	Let $C''$ be the union of $C'$ and a normal curve $C_K$ of slope $(1, -n)$ on $\partial N(K)$, where $n$ is a closest even integer to $\slope(K)$. We claim that $C''$ bounds an unoriented surface in $M_{[3\eps/4, \infty)}$. As $n$ is even, there is a compact surface properly embedded in the exterior of $K$ with boundary slope $(1, -n)$. It intersects each geodesic with length at most $\eps/2$ in a collection of meridians. For a geodesic with odd linking number with $K$, the number of these meridians is odd. For the others, it is even. As $C_i'$ is homologous to the meridian of $T_i$ over $\ZZ_2$, 
	we may modify the surface so that its boundary is precisely $C''$. This proves the claim.
	
	As $C''$ intersects each edge of $\mathcal{T}$ at most once, we can find a surface $F''$ in $M_{[\eps/2, \infty)}$ that it bounds such that 
	\[
	-\chi(F'') \le c_6 \vol(K) 
	\]
	for some constant $c_6$, just like in the proof of Theorem~\ref{thm:crosscap}. Now $C_i$ and $C_i'$ are equal in $H_1(T_i; \ZZ_2)$. Hence, we may insert a compact connected surface $F_i$ into a regular neighbourhood $N(T_i)$ of $T_i$ with $\d F_i = C_i \cup C_i'$. Since $C_i$ and $C_i'$ intersect each edge of $\mathcal{T}$ at most $c_5' \Area(T_i)$ times, this surface may be chosen so that 
	\[
	-\chi(F_i) \le c_5'' \Area(T_i) 
	\]
	for a constant $c_5''$ depending only on $\eps$. Hence, the surface  
	\[
	F := F'' \cup \bigcup_{i=1}^m F_i \subset M_{[3\eps/4, \infty)}
	\]
	satisfies $\d F = C_K \cup C$, and
	\begin{equation}\label{eqn:chi-F}
	-\chi(F) \le c_6 \vol(K) + \sum_{i=1}^{m} c_5'' \Area(T_i) \le c_7 \vol(K)
	\end{equation}
	for a constant $c_7$ that depends only on $\eps$. Here, the last inequality follows from the observation that $\Area(T_i) \le c_8 \vol(N(T_i))$ for some constant $c_8$, 
	where 
	\[
	N(T_i) := \{\, x \in V_i : d(x, T_i) \le r_i/2\,\},
	\]
    and $V_i$ is the solid toral component of $M_{(0,3\eps/4]}$ of tube radius $r_i$ that encloses the geodesic $\gamma_i \in \mathrm{OddGeo}(\eps/2)$.
	
    In each $V_i$, we construct the surface provided by Lemma \ref{Lem:SurfaceSolidTorus} with boundary $C_i = C \cap V_i$. We attach these surfaces to $F$ to form a surface $F_+$. We now specify a basis for $H_1(F_+)$. We start by picking a basis for $H_1(V_1 \cap F_+)$. We arrange that all but one of these basis elements have zero winding number around $V_1$. We then continue to $V_2$, and so on. We then extend this to a basis for $H_1(F_+)$ by adding some oriented curves in $F$. We order this basis as follows into $n+1$ blocks. In the first block, we place all the basis elements of $H_1(V_1 \cap F_+)$ that have zero winding number around $V_1$. 
    In the second block, we do the same for $V_2$, and so on. In the final block, we place all the remaining basis elements. We saw in the proof of Lemma~\ref{Lem:TwistSlopeShort} that there is a constant $a_0$ depending only on $\eps$ such that $\Area(T_i) \ge a_0$. As $\sum_{i=1}^m \Area(T_i) \le c_8 \vol(K)$, we have 
	\[
	|\mathrm{OddGeo}(\eps/2)| \le c_8 \vol(K) / a_0. 
	\]
	This, together with equation~\eqref{eqn:chi-F}, imply that the number of elements in this final block is bounded above by a linear function of $\vol(K)$.
	
	Let $G$ be the submatrix of the Goeritz form $G_{F_+}$ consisting of the first $n$ blocks. By Lemma~\ref{Lem:AddRowColumn}, $\sigma(G)$ and $\sigma(G_F)$ differ by at most the number of elements in the final block. Note that $G$ is block diagonal. 
	For the block corresponding to $V_i$, the signature differs from $\sigma(G_{F_+ \cap V_i})$ by at most one by Lemma~\ref{Lem:AddRowColumn}. On the other hand,
	\[
	|\sigma(G_{F_+ \cap V_i}) + \kappa(\tw_p(\gamma_i), \tw_q(\gamma_i))| \le 2
	\]
	by Lemma~\ref{Lem:SurfaceSolidTorus}.
	Hence, 
	\[
	\left | \sigma(G_{F_+}) + \sum_{\gamma \in \mathrm{OddGeo}(\eps/2)} \kappa(\tw_p(\gamma_i), \tw_q(\gamma_i)) \right | \le c_9 \vol(K) 
	\]
	for some constant $c_9$.
	By Gordon and Litherland's theorem (Theorem~\ref{thm:Gordon-Litherland}), 
	\[
	\sigma(K) = \sigma(G_{F_+}) + e(F_+)/2 = \sigma(G_{F_+}) + n / 2. 
	\]
	The result follows as $n$ is a closest even integer to $\slope(K)$. 
\end{proof}

\section{Experimental data and some conjectures about random knots}
\label{sec:conjectures}

We set out to find links between hyperbolic and 4-dimensional knot invariants. Initial scatter plots compared some 4-dimensional invariants (the signature and Heegaard Floer invariants $\tau$, $\nu$, and $\eps$),
the crossing number, and several hyperbolic invariants (volume, meridional and longitudinal translations, and the Chern--Simons invariant). As $\sigma$ is strongly correlated to $\tau$, $\nu$, and $\eps$, we decided to only focus on $\sigma$, which is more classical and easier to compute.

The strongest and most surprising correlation was between the signature and the real part of the meridional translation; see Figure~\ref{fig:butterfly}. There were some more predictable relationships among the hyperbolic invariants.

\begin{figure}
	\includegraphics[scale = 0.5]{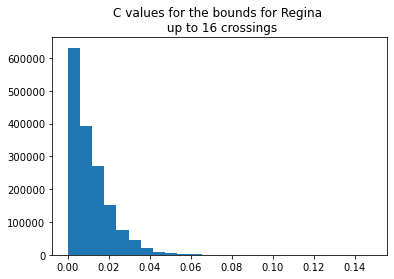}
	\caption{The distribution of 
		\[
		c_1(K) := |2 \sigma(K) - \slope(K)| \inj(K)^3 / \vol(K) 
		\]
		for knots up to 16 crossings in the Regina census.}
	\label{fig:c-values}
\end{figure}

\begin{figure}
	\includegraphics[width = 0.49 \textwidth]{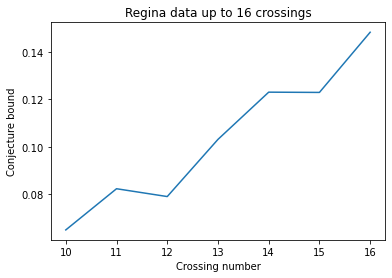}
	\includegraphics[width = 0.49 \textwidth]{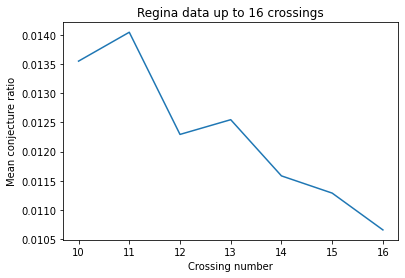}
	\caption{The maximum (left) and the mean (right) of $c_1(K)$
		as functions of the crossing number
		for knots up to 16 crossings in the Regina census.}
	\label{fig:c-by-crossing}
\end{figure}

Figure~\ref{fig:c-values} shows the distribution of 
\[
c_1(K) := |2 \sigma(K) - \slope(K)| \inj(K)^3 / \vol(K), 
\]
which indicates that the constant $c_1$ appearing in Theorem~\ref{thm:main} is typically quite small. The largest value of this quantity we managed to obtain is less than $0.234$, and we conjecture it is always at most 0.3. The left of Figure~\ref{fig:c-by-crossing} shows the maximum  and the right the mean of $c_1(K)$ by crossing number for the Regina census of knots of at most 16 crossings. 
See Figure~\ref{fig:injectivity-vol} for a scatter plot of injectivity radius versus volume for random hyperbolic knots of 10-80 crossings. This suggests that the injectivity radius is typically not too small as the volume increases.

\begin{figure}
	\includegraphics[scale = 0.3]{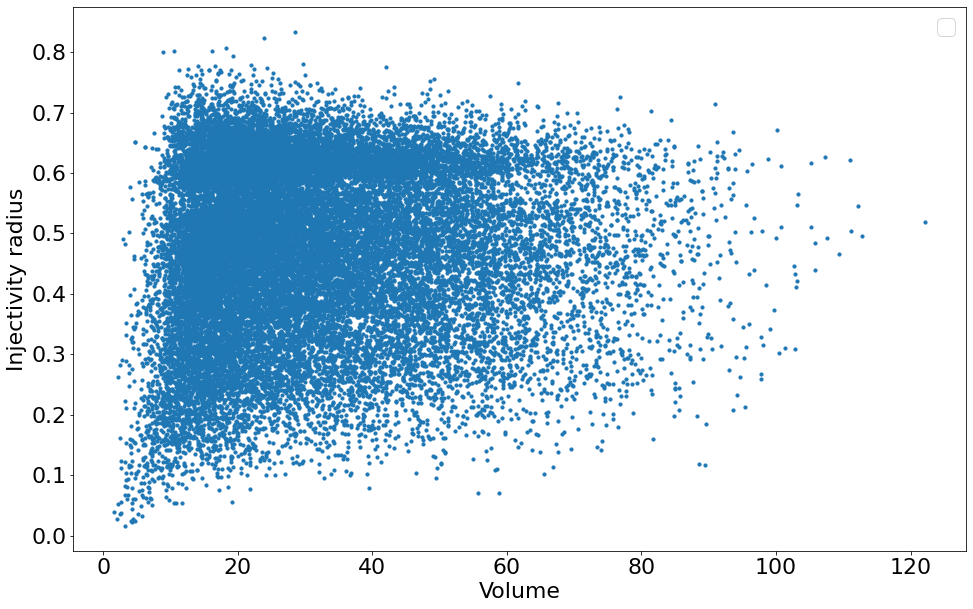}
	\caption{A scatter plot of injectivity radius versus volume for random knots of 10-80 crossings.}
	\label{fig:injectivity-vol}
\end{figure}

We will say that a property $P$ holds asymptotically almost surely, or a.a.s., in short, if the probability that $P$ holds for knots of $n$ crossings tends to 1 as $n \to \infty$.

It is known that there is a constant $A$ such that $\vol(K) \le A\, c(K)$, where $c(K)$ is the crossing number of $K$. From scatter plots, one might conjecture that there is a constant $a$ such that $a \, c(K) \le \vol(K)$ a.a.s. Such an inequality cannot hold for all hyperbolic knots $K$. For example, consider twist knots. More generally, the highly twisted knots considered in Section \ref{sec:highly-twisted} have bounded volume but unbounded crossing number.

We now consider the behaviour of the signature $\sigma(K)$ for random knots $K$. 
By Theorem~\ref{thm:Gordon-Litherland}, $\sigma(K)$ can be computed from the black surface of a checkerboard colouring of a diagram of $K$. Hence, it is the signature of a $c(K) \times c(K)$ matrix. If the signs of the eigenvalues of this matrix were independently distributed, then the expected value of $|\sigma(K)|$ would be $C \sqrt{c(K)}$ for some constant $C$. From computational evidence, it appears the constant is about $2$. 
 Based on this heuristic, we introduce the following definition:

\begin{definition}
The \emph{normalised signature} of a hyperbolic knot $K$ is
\[
\sh(K) := \frac{\sigma(K)}{\sqrt{\vol(K)}}.
\]
\end{definition}

We use the volume instead of the crossing number as it is easier to compute using SnapPy and is more regular.

Based on Figure \ref{fig:butterfly}, we initially conjectured that for any hyperbolic knot $K$ in $S^3$ with $|\widehat{\sigma}(K)| > 1$, the signature $\sigma(K)$ and $\mathrm{Re}(\mu(K))$ have the same sign. However, this turns out not to be true.

\begin{corollary}\label{cor:counterexample2}
	There exists a hyperbolic knot $K$ with $|\widehat{\sigma}(K)| > 1$, but with $\sigma(K)$ and $\mathrm{Re}(\mu(K))$ having opposite signs.
\end{corollary}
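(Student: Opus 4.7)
The plan is to apply Theorem~\ref{Thm:SignatureSlopeTwisting} to a two-component twisting family chosen so that the signature and the slope are driven to infinity with opposite signs while the volume stays bounded. By Lemma~\ref{lem:slope-formula}, $\slope(K) = \lambda \Re(\mu)/|\mu|^2$ with $\lambda > 0$, so $\mathrm{sign}(\slope(K)) = \mathrm{sign}(\Re(\mu(K)))$. It therefore suffices to exhibit a hyperbolic knot $K$ with $\sigma(K)$ and $\slope(K)$ of opposite signs and with $|\widehat{\sigma}(K)| > 1$.

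I would take $n = 2$ with linking numbers $\ell_1 = 2$ (even, so $m = 1$) and $\ell_2 = 1$ (odd). The choice $\ell_2 = 1$ makes the odd-component contribution $-\tfrac{1}{2}(\ell_2^2 - 1)q_2 = 0$ vanish in the signature estimate, so Theorem~\ref{Thm:SignatureSlopeTwisting} yields
\[
\slope(K(q_1,q_2)) = -4q_1 - q_2 + O(1), \qquad \sigma(K(q_1,q_2)) = -2q_1 + O(1).
\]
Setting $q_1 = -q$ and $q_2 = 5q$ for a large positive integer $q$ makes both $|q_i|$ large, $\sigma(K(q_1,q_2)) \to +\infty$, and $\slope(K(q_1,q_2)) \to -\infty$, giving opposite signs for $q$ large. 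By Thurston's hyperbolic Dehn surgery theorem, $\vol(K(q_1,q_2))$ stays bounded above by $\vol(S^3 \setminus L)$, where $L = K \cup C_1 \cup C_2$, so
\[
|\widehat{\sigma}(K(q_1,q_2))| = |\sigma(K(q_1,q_2))|/\sqrt{\vol(K(q_1,q_2))} \to \infty,
\]
in particular exceeding $1$ eventually.

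The main obstacle is to exhibit an explicit link $L = K \cup C_1 \cup C_2$ satisfying the hypotheses of Theorem~\ref{Thm:SignatureSlopeTwisting}: the $C_i$ must be disjoint unknots bounding disjoint discs, with $\lk(C_1, K) = 2$, $\lk(C_2, K) = 1$, and $S^3 \setminus L$ must be hyperbolic. The topological conditions are easy to arrange; for instance, one can start with a hyperbolic knot $K$ such as the figure-eight and place $C_1$ as a small circle encircling two parallel strands and $C_2$ as a meridional circle around a single strand in a generic diagram of $K$. Verifying that the resulting link complement is actually hyperbolic is the one delicate point, but for a particular example this can be checked by SnapPy, or by showing directly that $L$ is non-split, prime, and atoroidal and then invoking Thurston's hyperbolization theorem. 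Any such link $L$, together with the asymptotic estimates above, produces the desired counterexample.
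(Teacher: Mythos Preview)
Your overall strategy matches the paper's: apply Theorem~\ref{Thm:SignatureSlopeTwisting} with two twisting circles so that $\sigma$ and $\slope$ diverge with opposite signs while the volume stays bounded, then use Lemma~\ref{lem:slope-formula} to pass from $\slope$ to $\Re(\mu)$. The paper takes $(\ell_1,\ell_2)=(2,3)$ with $(q_1,q_2)=(17q,-8q)$; your choice $(\ell_1,\ell_2)=(2,1)$ with $(q_1,q_2)=(-q,5q)$ is a legitimate arithmetic variant.

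There is, however, a real gap in your proposed explicit link. If $C_2$ is a meridional circle encircling a \emph{single} strand of $K$, then the disc it bounds meets $K$ exactly once, and the once-punctured disc is an essential annulus in the link exterior (incompressible since a meridian of a nontrivial knot bounds no disc, and not boundary-parallel since its boundary curves lie on distinct boundary tori). Hence $S^3\setminus(K\cup C_1\cup C_2)$ is \emph{never} hyperbolic in that configuration, so Theorem~\ref{Thm:SignatureSlopeTwisting} does not apply. Worse, twisting along a geometric meridian does not change the knot at all, so $K(q_1,q_2)=K(q_1,0)$ and the $q_2$-dependence you need simply vanishes. To realise $\ell_2=1$ in a hyperbolic link you must let $C_2$ encircle at least three strands with algebraic count~$1$; alternatively, taking $\ell_2=3$ as the paper does avoids the issue entirely.
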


\begin{proof}
	We start with a hyperbolic link $K \cup C_1 \cup C_2$ in $S^3$ where $C_1$ and $C_2$ bound disjoint embedded discs, and where $\ell_1 = \lk(K, C_1) = 2$ and $\ell_2 = \lk(K,C_2) = 3$. We then build the highly twisted knots $K(q_1, q_2)$ as in Theorem~\ref{Thm:SignatureSlopeTwisting}. Set $q_1 = 17q$ and $q_2 = -8q$, where $q$ is a large positive integer. Then 
	\[
	\begin{split}
	\slope(K(q_1, q_2)) &\sim -4 \cdot 17 q + 9 \cdot 8 q = 4q \text{, whereas} \\ 	
	\sigma(K(q_1, q_2)) &\sim -2 \cdot 17q + 4 \cdot 8q = -2q. 
	\end{split}
	\]
	Hence, for $q$ sufficiently large, $\sigma(K(q_1, q_2))$ and $\slope(K(q_1,q_2))$ have opposite signs, and hence $\sigma(K(q_1,q_2))$ and $\mathrm{Re}(\mu(K(q_1, q_2)))$ also have opposite signs by Lemma~\ref{lem:slope-formula}. Note that $\widehat{\sigma}(K(q_1, q_2)) > 1$ if $q$ is sufficiently large, because $|\sigma(K(q_1, q_2))|$ tends to infinity whereas $\vol(K(q_1, q_2))$ is bounded.
\end{proof}

However, we do conjecture the following:

\begin{conjecture}
	\label{Conj:Signs}
	If $K$ is a hyperbolic knot in $S^3$ with $|\widehat{\sigma}(K)| > 1$, then $\sigma(K)$ and $\mathrm{Re}(\mu(K))$ have the same sign asymptotically almost surely.
\end{conjecture}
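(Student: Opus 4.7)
By Lemma~\ref{lem:slope-formula}, $\slope(K) = \lambda\,\Re(\mu)/|\mu|^2$ with $\lambda>0$, so the signs of $\slope(K)$ and $\Re(\mu(K))$ always coincide. The conjecture therefore reduces to showing that, asymptotically almost surely on the event $|\sh(K)|>1$, the signs of $\sigma(K)$ and $\slope(K)$ also coincide. The natural tool is the refined estimate of Theorem~\ref{Thm:SignatureCorrection}, which can be rewritten as
\[
\bigl|2\sigma(K) - \slope(K) + 2\!\!\!\sum_{\gamma \in \mathrm{OddGeo}(\eps/2)}\!\! \kappa(\tw_p(\gamma), \tw_q(\gamma))\bigr| \le 2c_4 \vol(K).
\]

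The proof of Corollary~\ref{cor:counterexample2} shows that the conjecture \emph{cannot} be unconditional: when $K$ contains a very short geodesic of odd linking number, the correction terms can reverse the sign of $\slope(K)/2$ relative to $\sigma(K)$ while $\vol(K)$ stays bounded. Hence the first step of the plan is to show that, in the chosen model of random $n$-crossing knots, such short odd-linking geodesics are typically absent: for every fixed $\eps \in (0,\eps_3)$, one would prove that $S^3\setminus K$ a.a.s.\ contains no closed geodesic of length less than $\eps/2$. The empirical behaviour in Figure~\ref{fig:injectivity-vol}, together with existing techniques for controlling the short-geodesic spectrum of random 3-manifolds, are the natural inputs. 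On this event the correction sum in Theorem~\ref{Thm:SignatureCorrection} vanishes, leaving only the error term $2c_4\vol(K)$.

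The main obstacle is that $2c_4\vol(K)$ is still far too coarse to interact usefully with the hypothesis $|\sigma(K)| > \sqrt{\vol(K)}$: to force $2\sigma(K)$ and $\slope(K)$ onto the same side of zero one needs $|2\sigma(K)-\slope(K)| < 2|\sigma(K)|$, so a typical bound of the form $|2\sigma(K)-\slope(K)| = o(\sqrt{\vol(K)})$ is required. The empirical distributions of $c_1(K)=|2\sigma(K)-\slope(K)|\inj(K)^3/\vol(K)$ in Figures~\ref{fig:c-values} and~\ref{fig:c-by-crossing} strongly suggest that $c_1(K)=o(1)$ a.a.s.\ as the crossing number grows; combined with an a.a.s.\ positive lower bound on $\inj(K)$ and an a.a.s.\ upper bound $\vol(K)\le A\,c(K)$, this would yield exactly the needed sub-$\sqrt{\vol(K)}$ concentration. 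Making this rigorous --- presumably via a central-limit-type analysis of the Goeritz matrix of a random checkerboard surface (cf.\ Theorem~\ref{thm:Gordon-Litherland} and the $|\sigma(K)|\sim C\sqrt{c(K)}$ heuristic motivating the definition of normalised signature) --- is the principal difficulty, and is why the statement is phrased as a conjecture rather than a theorem. Once such a concentration bound is available, on the event $|\sh(K)|>1$ one has $2|\sigma(K)|>2\sqrt{\vol(K)} \gg |2\sigma(K)-\slope(K)|$, and hence $2\sigma(K)$ and $\slope(K)$ must share their sign, completing the argument.
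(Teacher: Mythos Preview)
This statement is a \emph{conjecture}; the paper does not prove it. What the paper does provide is a short heuristic reduction: it observes that Conjecture~\ref{conj:asymptotic} with constant $b<2$ would imply Conjecture~\ref{Conj:Signs} for knots of sufficiently large volume. The argument is a single line: dividing
\[
|2\sigma(K)-\slope(K)|\le b\sqrt{\vol(K)}+c
\]
by $\sqrt{\vol(K)}$ gives $|2\widehat\sigma(K)-\slope(K)/\sqrt{\vol(K)}|\le b+c/\sqrt{\vol(K)}<2$ for large volume, so $|\widehat\sigma(K)|>1$ forces $\widehat\sigma(K)$ and $\slope(K)$ (hence $\Re(\mu)$, by Lemma~\ref{lem:slope-formula}) to share a sign.

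Your plan arrives at the same endpoint --- a concentration bound on $|2\sigma(K)-\slope(K)|$ on the scale of $\sqrt{\vol(K)}$ --- but by a longer route, and you ask for more than is needed. Two remarks. First, the detour through Theorem~\ref{Thm:SignatureCorrection} and an a.a.s.\ absence of short odd-linking geodesics is unnecessary: even after the correction sum vanishes you are still left with $|2\sigma-\slope|\le 2c_4\vol(K)$, which is useless on its own, and you then have to \emph{assume} a separate sub-$\sqrt{\vol}$ concentration bound anyway. Since that assumption is precisely Conjecture~\ref{conj:asymptotic}, you may as well invoke it directly, as the paper does. Second, you overstate what is required: you write that one needs $|2\sigma(K)-\slope(K)|=o(\sqrt{\vol(K)})$, but the hypothesis $|\widehat\sigma(K)|>1$ only demands $|2\sigma(K)-\slope(K)|<2|\sigma(K)|$, and since $2|\sigma(K)|>2\sqrt{\vol(K)}$, any bound of the form $b\sqrt{\vol(K)}+c$ with $b<2$ suffices. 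Your first step, identifying the signs of $\slope(K)$ and $\Re(\mu(K))$ via Lemma~\ref{lem:slope-formula}, is correct and is exactly how the paper links the two quantities.
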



We also state the following conjecture, which proposes a more precise relationship between slope and signature.

\begin{conjecture}\label{conj:asymptotic}
	There are constants $b$ and $c$ such that, for any hyperbolic knot $K$ in $S^3$, we have
	\begin{equation} \label{eqn:conj}
		|2\sigma(K) - \slope(K)| \le b\sqrt{\vol(K)} + c
	\end{equation}
	asymptotically almost surely.
\end{conjecture}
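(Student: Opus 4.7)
The plan is to combine Theorem~\ref{Thm:SignatureCorrection} with a random-matrix argument on the associated Goeritz matrix, reducing its $O(\vol(K))$ error term to $O(\sqrt{\vol(K)})$ under the probabilistic hypothesis. The motivating heuristic is that a ``random'' symmetric integer matrix of size $N$ has signature of order $\sqrt{N}$ rather than $N$, by a Wigner-type spectral symmetry; this matches the paper's observation that $|\sigma(K)|$ appears to grow like $\sqrt{\vol(K)}$ on average.

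I would proceed in three steps. First, I would argue that for a random hyperbolic knot, the set $\mathrm{OddGeo}(\eps/2)$ of short geodesics with odd linking number with $K$ is empty asymptotically almost surely, so that the correction sum in Theorem~\ref{Thm:SignatureCorrection} vanishes. Heuristically, short geodesics are rare in random hyperbolic 3-manifolds, and the odd-linking condition is a further restriction; one might adapt techniques from the Dunfield--Thurston analysis of random Heegaard splittings or work in one of the standard probabilistic knot models. Second, revisiting the construction in the proof of Theorem~\ref{Thm:SignatureCorrection}, one extracts an unoriented spanning surface $F_+$ for $K$ with boundary slope equal to the nearest even integer $n$ to $\slope(K)$ and with $b_1(F_+) = O(\vol(K))$; Gordon--Litherland (Theorem~\ref{thm:Gordon-Litherland}) then gives
\[
2\sigma(K) - \slope(K) = 2\sigma(G_{F_+}) + (n - \slope(K)),
\]
and since $|n - \slope(K)| \le 1$, the conjecture reduces to showing $|\sigma(G_{F_+})| = O(\sqrt{b_1(F_+)}) = O(\sqrt{\vol(K)})$ a.a.s. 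Third, one formalises the sense in which $G_{F_+}$ behaves, under a natural model of random knots, as a symmetric matrix with approximately centred and weakly dependent entries, and invokes (or proves a tailored analogue of) Wigner-type spectral concentration to conclude.

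The principal obstacle is the third step. The Goeritz matrix is not a Wigner matrix in any off-the-shelf sense: its entries record linking numbers between homology classes of a specific spanning surface, and are strongly correlated both algebraically (signs propagate along whole homology classes) and geometrically (adjacent entries share cells of the underlying triangulation). One must also pin down a probabilistic model of random knots in which sufficient independence can be harnessed, for example random braid closures, random plat closures, or the Petaluma model, each of which requires its own analysis. A secondary obstacle is Step~1: controlling the distribution of short geodesics for random knots in a form precise enough to rule out odd-linking ones with probability tending to~$1$. A tempting alternative to Step~2 is to replace the geometric spanning surface of Theorem~\ref{Thm:SignatureCorrection} by a checkerboard surface of a random diagram, making the random-matrix analysis more diagrammatic, at the cost of reintroducing a writhe contribution $e(F)/2$ that must itself be shown to be $O(\sqrt{\vol(K)})$ a.a.s.
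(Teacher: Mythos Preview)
The statement you are attempting is a \emph{conjecture} in the paper, not a theorem: the paper offers no proof, only the heuristic that if the signs of the eigenvalues of the Goeritz matrix $G_F$ were independent, then $|\sigma(G_F)|$ would be of order $\sqrt{b_1(F)}$, hence $\sqrt{\vol(K)}$. Your proposal is essentially a fleshing-out of that same heuristic into a research programme, and you have correctly identified that it is not a proof.

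Your Step~2 is sound and matches the paper's reasoning (the identity $2\sigma(K)-\slope(K)=2\sigma(G_{F_+})+(n-\slope(K))$ follows from Theorem~\ref{thm:Gordon-Litherland} and $e(F_+)=n$). The genuine gaps are exactly where you place them. In Step~1, showing $\mathrm{OddGeo}(\eps/2)=\varnothing$ a.a.s.\ requires a specific random-knot model and control of the systole distribution; nothing in the paper, nor in Dunfield--Thurston style arguments for random Heegaard splittings, transfers directly to knot complements in $S^3$. In Step~3, the Goeritz matrix $G_{F_+}$ is built from a triangulation determined by the hyperbolic geometry of $S^3\setminus K$, and its entries are linking numbers of curves that share large portions of the surface; there is at present no mechanism for decoupling these entries enough to invoke a Wigner-type result, and no random-knot model in which this has been carried out. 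Your alternative of using a checkerboard surface trades one hard step for another (controlling the writhe term $e(F)/2$ to order $\sqrt{\vol(K)}$ a.a.s.), which is equally open.

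In short: your outline is consonant with the paper's own heuristic, but neither constitutes a proof, and the paper does not claim one.
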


By Corollary~\ref{cor:counterexample1}, this does not hold for all knots either. In fact, there are families of hyperbolic knots for which $|2\sigma(K) - \slope(K)|$ is not bounded by a linear function of the volume.

The proof of Theorem~\ref{thm:main} provides some heuristic for Conjecture~\ref{conj:asymptotic}. Indeed, if we assume that the signs of the eigenvalues of the Goeritz matrix $G_F$ are independent, 
then the signature on average is of order $\sqrt{c(K)}$. This justifies the factor $\sqrt{\vol(K)}$ in the upper bound.

\begin{figure}
	\includegraphics[scale = 0.5]{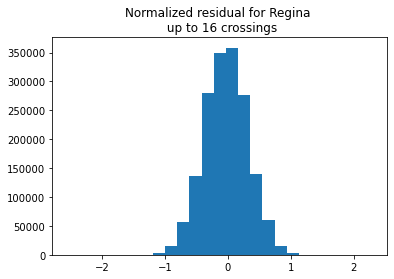}
	\caption{The distribution of 
		\[
		(2 \sigma(K) - \slope(K))  / \sqrt{\vol(K)} 
		\]
		for knots up to 16 crossings in the Regina census.}
	\label{fig:c-distribution}
\end{figure}

If $b < 2$ (and the data supports this; see Figure~\ref{fig:c-distribution}), then Conjecture~\ref{conj:asymptotic} implies Conjecture~\ref{Conj:Signs} for knots $K$ with sufficiently large volume a.a.s. This is because equation~\eqref{eqn:conj} is equivalent to the inequality
\[
\left| 2 \widehat{\sigma}(K) - \left(\slope(K)/\sqrt{\vol(K)} \right) \right | \leq b + c / \sqrt{\vol(K)}.
\]
If $b < 2$, then $b + c / \sqrt{\vol(K)} < 2$ for all knots with sufficiently large volume. So, if $|\widehat{\sigma}(K)| > 1$, then $\widehat{\sigma}(K) $ and $\slope(K)$ have the same sign.

\bibliography{signature-cusp-geometry-biblio}
\bibliographystyle{plain}

\end{document}